\documentclass[11pt]{amsart}

\usepackage{amsmath,enumitem,amssymb,amsthm,mathtools,array}
\usepackage[colorlinks=true,linkcolor=blue,citecolor=blue,urlcolor=blue]{hyperref}
\newtheorem{theorem}{Theorem}[section]
\newtheorem{proposition}[theorem]{Proposition}
\newtheorem{lemma}[theorem]{Lemma}
\newtheorem{corollary}[theorem]{Corollary}
\newtheorem{definition}[theorem]{Definition}
\newtheorem{remark}[theorem]{Remark}

\newcommand{\C}{\mathbb C}
\newcommand{\E}{\mathbb E}

\newcommand{\N}{\mathbb N}
\newcommand{\Nzero}{\mathbb N_0}
\newcommand{\Q}{\mathbb Q}
\newcommand{\R}{\mathbb R}
\newcommand{\T}{\mathbb T}
\newcommand{\Z}{\mathbb Z}
\newcommand{\dd}{\overset{\mathrm{d}}{=}}

\newcommand{\bbS}{\mathbb S}

\DeclarePairedDelimiter{\abs}{\lvert}{\rvert}

\title[Decorated \(p\)-adic sssi processes]{Decorated stable $p$-adic self-similar processes with stationary increments}
\author{Yi Shen}
\author{Zhenyuan Zhang}
\date{}

\begin{document}

\begin{abstract}
We construct new classes of examples of self-similar
processes with stationary increments indexed by $\mathbb Q_p$ via stable integrals. Classical constructions arise from the real counterpart and from discounted branching random walks. We discuss a new decoration technique that significantly enlarges these classes. The decoration technique makes use of the special symmetry of $\mathbb{Q}_p$ to obtain self-similarity and stationarity of increments, and it does not have an analogue on the real line. We also show that these enlarged classes of decorated processes are pairwise incomparable under inclusion.
\end{abstract}

\maketitle

\section{Introduction}

Self-similar processes with stationary increments (sssi processes), introduced by \cite{Lamperti1962}, are defined through probabilistic symmetries that preserve the law under translations and dilations of the time
index $\R$:
\[
  \{X(t+h)-X(h)\}_{t\in\R}\dd\{X(t)\}_{t\in\R},
  \qquad
  \{X(at)\}_{t\in\R}\dd\{a^H X(t)\}_{t\in\R},
\]for all $a>0$ and $h\in\R$, 
where $H>0$. The process can also be indexed by \([0,\infty)\); in that case, the same identities are imposed with \(t\geq 0\) and \(h\geq 0\).

Besides well-known examples such as fractional Brownian motion and stable L\'{e}vy processes, stable-integral constructions provide further explicit classes of sssi processes such as the linear fractional stable motion  \cite{CambanisMaejima1989} and the log-fractional stable motion  \cite{KasaharaMaejimaVervaat1988}; see \cite{SamorodnitskyTaqqu1994,KonoMaejima1991} for a detailed account. Here, by a stable-integral construction, we mean a representation of the form
\[
  X(t)=\int_E f_t(x)\,M_\alpha(dx),\qquad t\in \R,
\]
where \(M_\alpha\) is a symmetric \(\alpha\)-stable random measure on a Borel \(\sigma\)-finite
 measure space \((E,\mathcal E,\mu)\), and the kernels
\(f_t\in L^\alpha(E,\mu)\) are chosen so that the resulting process $\{X(t)\}_{t\in\R}$ has the
desired symmetries.
More broadly, stable sssi processes sit at the intersection of heavy tails, ergodic-theoretic flow representations, limit theorems, and
long-range dependence \cite{Samorodnitsky2016LRD,PipirasTaqqu2017LRD,PipirasTaqqu2017StableSSSI}. For a classical introduction to self-similar processes, we refer the reader to \cite{EmbrechtsMaejima2002}. 

This paper concerns the $p$-adic analogue of sssi processes.  For a fixed prime \(p\), a real-valued process
\(X=\{X(t)\}_{t\in\Q_p}\) is \(p\)-adic sssi if
\[
  \{X(at)\}_{t\in\Q_p}\dd \{\abs{a}_p^H X(t)\}_{t\in\Q_p},
  \qquad
  \{X(t+h)-X(h)\}_{t\in\Q_p}\dd \{X(t)\}_{t\in\Q_p}
\]
for all \(a\in\Q_p^\times=\Q_p\setminus\{0\}\) and \(h\in\Q_p\), where $H>0$. An immediate consequence is the stochastic continuity with respect to the $p$-adic topology, compared to the Euclidean topology for classical sssi processes on $\R$, and hence different properties and constructions are expected. For example, previous works indicate that these processes arise naturally from the discrete-time analogue of the sssi property \cite{GefferthEtAl2003,ShenZhang2021}, admit a spectral representation \cite{ShenZhang2021}, and have drastically different sample path behaviors from sssi processes on $\R$ \cite{ShenZhang2026AP}.

The goal of this paper is to construct new examples of $p$-adic sssi processes using stable integrals. We start with the obvious constructions. First, \cite{ShenZhang2021} provides a class of tree-like $p$-adic sssi processes, with an interesting connection to discounted branching random walks \cite{BenjaminiGurelGurevichSolomyak2009} and Bernoulli convolutions \cite{PeresSchlagSolomyak}. Such a class can be easily reformulated using a stable-integral representation.     
Second, many classical stable sssi processes on the real line, such as the linear fractional stable motion, the log-fractional stable motion, and the harmonizable fractional stable motion, have $p$-adic analogues. These constructions will be presented in Section \ref{sec:simple}.

Our main contribution is introducing further non-trivial stable integral constructions that lead to larger classes of \(p\)-adic sssi processes (Section \ref{sec:decorated}). Employing the symmetric structure of $\Q_p$, we develop a \textit{decoration} technique that expands the above classes of examples. Intuitively, decorated processes take slices of (quotient) spaces in $\Q_p$, assign arbitrary weights and random shifts to the stable integrals on these slices, and take a weighted average over these integrals. Their stable-integral representations often involve product spaces of such slices and finite groups, and hence admit extra degrees of freedom compared to the classical constructions, sometimes even in a continuum manner. 
Table \ref{tab:real-padic-decorated} summarizes our main findings. To the best of our knowledge, the decoration technique is new and does not have an analogue on the real line for sssi processes.

\begin{table}[t]
\centering
\footnotesize
\renewcommand{\arraystretch}{1.25}
\begin{tabular}{|>{\raggedright\arraybackslash}p{0.27\textwidth}|
                >{\raggedright\arraybackslash}p{0.30\textwidth}|
                >{\raggedright\arraybackslash}p{0.30\textwidth}|}
\hline
\textbf{Real sssi model} &
\textbf{\(p\)-adic sssi analogue} &
\textbf{Decorated \(p\)-adic extension} \\
\hline
Linear fractional stable motion
\cite[Section 3]{CambanisMaejima1989}
&
\(p\)-adic linear fractional stable motion
(Definition~\ref{def:direct-lfsm})
&
Decorated \(p\)-adic linear fractional stable motion
(Definition~\ref{thm:linear-fractional})
\\
\hline
Log-fractional stable motion
\cite[Section 3]{KasaharaMaejimaVervaat1988}
&
\(p\)-adic log-fractional stable motion
(Definition~\ref{prop:fractional-kernels})
&
N/A (Remark \ref{rem:log+?})
\\
\hline
Harmonizable fractional stable motion
\cite[Section 4]{CambanisMaejima1989}
&
\(p\)-adic harmonizable fractional stable motion
(Definition~\ref{thm:harmonizable})
&
Decorated \(p\)-adic harmonizable fractional stable motion
(Definition~\ref{thm:packet})
\\
\hline
N/A
&
BRW tree process
\cite[Example~4.1]{ShenZhang2021}
&
Decorated BRW tree process
(Definition~\ref{thm:tree-block})
\\
\hline
\end{tabular}

\vspace{0.5em}
\caption{A summary of classical real stable sssi motions, their \(p\)-adic analogues, and
decorated \(p\)-adic extensions.}
\label{tab:real-padic-decorated}
\end{table}

In addition to constructing new classes of processes, this paper also makes contribution in two other aspects. First, applying the case $\alpha=2$ in our construction immediately yields various simple Gaussian-integral representations of the $p$-adic (fractional) Brownian motion, which was first constructed in \cite{BikulovVolovich1997} and later studied and extended in \cite{Kamizono2007,Kamizono2009,KhrennikovEtAl2013} using techniques such as Paley--Wiener representations and wavelet expansions.  Second, we apply comparison tests to distinguish stable-integral constructions. In particular, we show that distinct classes of decorated processes are pairwise incomparable under inclusion in Section \ref{sec:4}. Our comparison tests hinge on the self-similarity index, the support of the projective spectral measure, and the finiteness of the minimal spectral supremum.

\section{Definitions and the stable-integral test}\label{sec:2}

We fix the notation and basic definitions that will be used throughout this paper. Fix a prime \(p\). The Haar measure on \(\Q_p\) is denoted by \(m\), normalized such that $m(\Z_p)=1$. 
We use \(v_p\) to denote the $p$-adic valuation, so \(\abs{x}_p=p^{-v_p(x)}\) for \(x\in\Q_p^\times\).
Throughout, let \(\N=\{1,2,\ldots\}\), \(\Nzero=\{0,1,2,\ldots\}\),
\(\T=\R/\Z\), and
\(\bbS^1=\{z\in\C:\abs{z}=1\}\).  We also write \(\mathbb{Z}_p^\times=\mathbb{Z}_p\setminus p\mathbb{Z}_p\) for the group of \(p\)-adic units.

\begin{definition}
Let \(H>0\).  A real-valued process \(X=\{X(t)\}_{t\in\Q_p}\) is called \(p\)-adic
\(H\)-self-similar with stationary increments, abbreviated \(p\)-adic
\(H\)-sssi, if for every \(a\in\Q_p^\times\) and \(h\in\Q_p\),
\begin{equation}\label{eq:sssi-scale}
  \{X(at)\}_{t\in\Q_p}\dd \{\abs{a}_p^H X(t)\}_{t\in\Q_p},
\end{equation}
and
\begin{equation}\label{eq:sssi-increments}
  \{X(t+h)-X(h)\}_{t\in\Q_p}\dd \{X(t)\}_{t\in\Q_p}.
\end{equation}
\end{definition}

\begin{definition}
A real-valued process \(X=\{X_n\}_{n\in\Nzero}\) is called discrete \(p\)-adic \(H\)-sssi if for all \(a\in\N\) and \(h\in\Nzero\),
\[
  \{X_{an}\}_{n\in\Nzero}\dd \{\abs{a}_p^H X_n\}_{n\in\Nzero},
  \qquad
  \{X_{n+h}-X_h\}_{n\in\Nzero}\dd \{X_n\}_{n\in\Nzero}.
\]
\end{definition}

There exists a correspondence between $p$-adic sssi and discrete $p$-adic sssi processes. On one hand, the discrete skeleton of any $p$-adic sssi process is discrete $p$-adic sssi. On the other hand, the $p$-adic analogue of \cite[Theorem~3.9]{ShenZhang2021} shows that any discrete $p$-adic sssi process can be uniquely extended (in distribution) to a $p$-adic sssi process. See Lemma \ref{lem:skeleton-determines} below for details.

Let \((E,\mathcal E,\mu)\) be Borel $\sigma$-finite. We recall that a real symmetric \(\alpha\)-stable
(\(S\alpha S\)) random measure \(M_\alpha\) with control measure \(\mu\) is an
independently scattered random measure such that, for every measurable set 
\(A\in \mathcal E\) with \(\mu(A)<\infty\), $M_\alpha(A)$ 
is \(S\alpha S\) with scale
\(\mu(A)^{1/\alpha}\).  Equivalently,
\[
  \E \left[e^{i\theta M_\alpha(A)}\right]
  =
  \exp(-|\theta|^\alpha\mu(A)),
  \qquad \theta\in\R .
\]
In this case, for every
\(f\in L^\alpha(E,\mu)\), the stable integral \(\int_E f\,dM_\alpha\) is
well-defined and satisfies
\begin{align}
     \E\left[\exp\left(i\theta\int_E f\,dM_\alpha\right)\right]
  =
  \exp\left(-|\theta|^\alpha\int_E |f(x)|^\alpha\,\mu(dx)\right),
  \qquad \theta\in\R .\label{eq:31}
\end{align}
 See
\cite[Sections~3.3--3.5]{SamorodnitskyTaqqu1994} for a detailed treatment. The following proposition is a counterpart of \cite[Proposition 7.3.6]{SamorodnitskyTaqqu1994}, which helps us identify stable processes that are $p$-adic sssi.

\begin{proposition}\label{prop:stable-test}
Let \(0<\alpha\le2\) and \(H>0\). In the above setting, suppose that \(f_t\in L^\alpha(E,\mu)\) for each \(t\in\Q_p\), and
define
\begin{align}
    X(t)=\int_E f_t(x)\,M_\alpha(dx).\label{eq:xt}
\end{align}
Then
\begin{equation}\label{eq:stable-exponent}
  \E \left[\exp\left(i\sum_{j=1}^r\theta_jX(t_j)\right)\right]
  =
  \exp\left(-\int_E
  \bigg|\sum_{j=1}^r\theta_jf_{t_j}(x)\bigg|^\alpha\mu(dx)\right).
\end{equation}
Moreover, \(X\) is \(p\)-adic \(H\)-sssi if and only if the following two identities
hold for every \(r\ge1\), \(t_1,\ldots,t_r\in\Q_p\),
\(\theta_1,\ldots,\theta_r\in\R\), \(h\in\Q_p\), and \(a\in\Q_p^\times\):
\begin{equation}\label{eq:test-increments}
  \int_E\bigg|\sum_{j=1}^r\theta_j(f_{t_j+h}-f_h)\bigg|^\alpha d\mu
  =
  \int_E\bigg|\sum_{j=1}^r\theta_jf_{t_j}\bigg|^\alpha d\mu
\end{equation}
and
\begin{equation}\label{eq:test-scale}
  \int_E\bigg|\sum_{j=1}^r\theta_jf_{a t_j}\bigg|^\alpha d\mu
  =
  \abs{a}_p^{\alpha H}
  \int_E\bigg|\sum_{j=1}^r\theta_jf_{t_j}\bigg|^\alpha d\mu .
\end{equation}
\end{proposition}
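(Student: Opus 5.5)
The argument combines two facts: the stable integral is linear in the integrand, and a real random vector's law is determined by its characteristic function.

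\emph{Step 1: the formula \eqref{eq:stable-exponent}.} For fixed $t_1,\dots,t_r$ and $\theta_1,\dots,\theta_r$, the function $g=\sum_j\theta_jf_{t_j}$ lies in $L^\alpha(E,\mu)$. For $0<\alpha<1$ this space is not normed, but it is still a vector space, because $|a+b|^\alpha\le 2^{\alpha}(|a|^\alpha+|b|^\alpha)$ pointwise. The stable integral is linear on $L^\alpha(E,\mu)$ (see \cite[Sections~3.4--3.5]{SamorodnitskyTaqqu1994}), so $\sum_j\theta_jX(t_j)=\int_E g\,dM_\alpha$ almost surely. Applying \eqref{eq:31} to $g$ with $\theta=1$ gives \eqref{eq:stable-exponent}.

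\emph{Step 2: reduction to equality of finite-dimensional distributions.} Both symmetry properties \eqref{eq:sssi-scale} and \eqref{eq:sssi-increments} are equalities in law of processes indexed by $\Q_p$. Such equalities are equivalent to equality of all finite-dimensional distributions. By Cram\'er--Wold, or directly by uniqueness of the characteristic function on $\R^r$, each of these is equivalent to equality of the joint characteristic functions at every $(\theta_1,\dots,\theta_r)\in\R^r$.

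\emph{Step 3: the increment identity.} Put $Y(t)=X(t+h)-X(h)$. By linearity,
\[
\sum_j\theta_jY(t_j)=\int_E\sum_j\theta_j(f_{t_j+h}-f_h)\,dM_\alpha .
\]
The same computation as in Step 1 therefore gives its characteristic function as $\exp(-\text{LHS of }\eqref{eq:test-increments})$. The characteristic function of $\sum_j\theta_jX(t_j)$ is $\exp(-\text{RHS of }\eqref{eq:test-increments})$. Since $x\mapsto e^{-x}$ is injective on $[0,\infty)$, equality of the two characteristic functions is exactly \eqref{eq:test-increments}. For the ``if'' direction, apply this for every $r$, $t_j$, $\theta_j$ and $h$. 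For the ``only if'' direction, equality in law forces equality of the characteristic functions, hence \eqref{eq:test-increments}.

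\emph{Step 4: the scaling identity.} The characteristic function of $\sum_j\theta_jX(at_j)$ is $\exp(-\text{LHS of }\eqref{eq:test-scale})$. For the rescaled process,
\[
\sum_j\theta_j|a|_p^HX(t_j)=\int_E\sum_j|a|_p^H\theta_jf_{t_j}\,dM_\alpha ,
\]
whose exponent is $|a|_p^{\alpha H}\int_E|\sum_j\theta_jf_{t_j}|^\alpha\,d\mu$ by homogeneity of $|\cdot|^\alpha$. The same injectivity argument then shows that \eqref{eq:sssi-scale} is equivalent to \eqref{eq:test-scale}.

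No step is a genuine obstacle. The only point needing care is Step 1, namely that linearity of the stable integral holds almost surely for $0<\alpha<1$ as well as for $\alpha\ge1$. This is standard from the construction of the integral as a limit in probability of simple-function integrals.
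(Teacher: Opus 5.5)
Your proposal is correct and follows essentially the same route as the paper. Linearity of the stable integral turns each linear combination into a single stable integral, so \eqref{eq:stable-exponent} follows from \eqref{eq:31}. Both symmetries then reduce to equality of characteristic functions, equivalently of their exponents. You also spell out the linearity and injectivity details and the converse direction, which the paper leaves implicit as ``similar''.
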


\begin{proof}
\sloppy Formula \eqref{eq:stable-exponent} follows directly from \eqref{eq:31}.  If
\eqref{eq:test-increments} holds, then every real linear combination of
\(
  (X(t_1+h)-X(h),\dots,X(t_r+h)-X(h))
\)
has the same characteristic function as the corresponding linear combination of
\((X(t_1),\dots,X(t_r))\).  This gives \eqref{eq:sssi-increments}.  If
\eqref{eq:test-scale} holds, then the characteristic function of
\((X(at_1),\dots,X(at_r))\) equals that of
\((\abs{a}_p^H X(t_1),\dots,\abs{a}_p^H X(t_r))\), which proves \eqref{eq:sssi-scale}. The converse is similar.
\end{proof}

In Sections \ref{sec:simple} and \ref{sec:decorated} below, we will repeatedly apply Proposition \ref{prop:stable-test} to prove that a given process $\{X(t)\}_{t\in\Q_p}$ constructed via \eqref{eq:xt} is $p$-adic sssi. Such a proof will consist of three independent steps:
\begin{itemize}
    \item proving $f_t\in L^\alpha(E,\mu)$ for well-definedness;
    \item proving \eqref{eq:test-increments} for stationarity of increments;
    \item proving \eqref{eq:test-scale} for $p$-adic self-similarity.
\end{itemize}

\section{Simple examples}\label{sec:simple}

In this section, we introduce simple examples of stable $p$-adic sssi processes that have their real counterparts in existing literature. We then apply Proposition \ref{prop:stable-test} to prove that these examples are indeed $p$-adic sssi.

\subsection{Constructions and basic observations}

The following classes of kernels are the \(p\)-adic analogues of the log-fractional stable
motion, the linear fractional stable motion, and the harmonizable fractional stable motion from the classical real-parameter theory of stable sssi processes \cite{CambanisMaejima1989,KasaharaMaejimaVervaat1988}.

\begin{definition}[\(p\)-adic log-fractional stable motion]
\label{prop:fractional-kernels}
Let \(0<\alpha\le2\), and let \(M_\alpha\) be a real symmetric \(S\alpha S\)
random measure on \(\Q_p\) with Haar control measure. The $p$-adic  log-fractional stable motion is defined as\footnote{Here and later in this paper, the singular points such as $x=0,\,t$ can be assigned arbitrary values since they are Haar-null.}
\[
  X(t)=\int_{\Q_p} \left(\log\abs{t-x}_p-\log\abs{x}_p \right)\,M_\alpha(dx).
\]
\end{definition}

\begin{definition}[\(p\)-adic linear fractional stable motion]\label{def:direct-lfsm}
Let \(0<\alpha\le2\) and \(H>0\) with \(H\ne1/\alpha\), and let \(M_\alpha\) be a real symmetric
\(S\alpha S\) random measure on \(\Q_p\) with Haar control measure.  The \(p\)-adic linear fractional stable motion is defined as
\[
  X(t)=\int_{\Q_p}
  \left(|t-x|_p^{H-1/\alpha}-|x|_p^{H-1/\alpha}\right)M_\alpha(dx),
  \qquad t\in\Q_p .
\]
\end{definition}

Define the additive character \(\chi:\Q_p\to\bbS^1\) by $\chi(x)=1$ for $x\in\Z_p$ and
\[
  \chi(p^{-k}u)=\exp\left(2\pi i\,\frac{u\bmod p^k}{p^k}\right),
  \qquad k\ge1,\ u\in\Z_p^\times .
\]

\begin{definition}[\(p\)-adic harmonizable fractional stable motion]\label{thm:harmonizable}
Let \(0<\alpha\le2\), \(H>0\), and let \(\widetilde M_\alpha\) be a complex
isotropic \(S\alpha S\) random measure on \(\Q_p\) with Haar control measure. The $p$-adic harmonizable fractional stable motion is defined as
\[
  X(t)=\operatorname{Re}\int_{\Q_p}(\chi(t\xi)-1)\abs{\xi}_p^{-H-1/\alpha}\,\widetilde M_\alpha(d\xi).
\]
\end{definition}

\begin{remark}
The discrete skeleton of the \(p\)-adic harmonizable fractional stable motion can also be represented directly using the spectral representation formula for discrete $p$-adic sssi processes in $L^2$ \cite[Theorem 5.1]{ShenZhang2021}, interpreted in the stable sense, with $\{A^{(m)}_\ell\}_{m\in\N,\,0< \ell< p^m,\,p\nmid\ell}$ independent $S\alpha S$, whose scale parameter may depend on $m$.
\end{remark}

\begin{remark}
An interesting observation is that the \(p\)-adic kernels here have wider admissible ranges than their real analogues. Indeed, the real linear fractional stable motion and the real harmonizable fractional stable motion require $H\in(0,1)$ and the log-fractional case requires $\alpha\in(1,2]$, due to integrability issues on the real line. These integrability issues no longer persist in the $p$-adic setting.
\end{remark}

\begin{remark}

Since \(\Q_p\) does not carry a natural linear ordering structure, there is no direct \(p\)-adic analogue of the $S\alpha S$ L\'{e}vy process defined on \([0,\infty)\), whose stable-integral representation is given by
\[
X(t)=\int_0^\infty \mathbf{1}_{\{x\le t\}}\,M_\alpha(dx)
=\int_0^t M_\alpha(dx),\qquad t\ge 0;
\]
see \cite[Example 3.6.1]{SamorodnitskyTaqqu1994}. 
\end{remark}

 The next (stable) BRW tree process is the main  example for discrete \(p\)-adic sssi
processes; see \cite[Example 4.1]{ShenZhang2021}, which is not restricted to stable processes.  It is closely analogous to branching
random walk with exponentially decreasing steps (or discounted branching
random walks) 
\cite{BenjaminiGurelGurevichSolomyak2009,Zhang2026DiscountedBRW}.

\begin{definition}[BRW tree process]\label{def:brw-tree}
Let \(0<\alpha\le2\), \(H>0\), and let
\[
  E=\bigsqcup_{k=0}^{\infty}\Z/p^{k+1}\Z
\]
with control measure \(\mu\) equal to the counting measure on each
level $k$.  Let \(M_\alpha\) be a real symmetric \(S\alpha S\) random measure on
\(E\) with control measure \(\mu\).  Define the (stable) BRW tree process as
\begin{align}\label{eq:brw-tree}
  \begin{split}
      X_n=\int_{E}p^{-kH}
  \left(\mathbf 1_{\{r\equiv n\!\!\!\pmod {p^{k+1}}\}}
        -\mathbf 1_{\{r\equiv0\!\!\!\pmod {p^{k+1}}\}}\right)
  &M_\alpha(dk,dr),\\
  &\quad n\in\N_0.
  \end{split}
\end{align}
\end{definition}

\begin{remark}
       There is yet another class of \(p\)-adic sssi
processes given by \cite[Example~4.5]{ShenZhang2021}, but we do not include it here because its marginals cannot be made
nondegenerate $S\alpha S$.  For instance, its marginal at \(n=1\) has the form $X_1=\sum_{k\ge0}c^k Z_k,$
where $c\in(0,1)$ and the \(Z_k\)'s are i.i.d.~and \(Z_0\) has an atom at \(0\).  Therefore, $X_1 \dd Z_0+cX_1',$
with \(X_1'\) an independent copy of \(X_1\).  If \(X_1\) were
\(S\alpha S\) with characteristic function
\(\mathbb E e^{i\theta X_1}=\exp(-\sigma^\alpha|\theta|^\alpha)\), then
\[
  \mathbb E e^{i\theta Z_0}
  =
  \frac{\exp(-\sigma^\alpha|\theta|^\alpha)}
       {\exp(-\sigma^\alpha c^\alpha|\theta|^\alpha)}
  =
  \exp(-(1-c^\alpha)\sigma^\alpha|\theta|^\alpha).
\]
Thus, \(Z_0\) would itself be nondegenerate \(S\alpha S\), which is impossible
because \(Z_0\) has an atom at \(0\).  Therefore, \(X_1\) cannot be
nondegenerate \(S\alpha S\).
\end{remark}

\subsection{Verifying the $p$-adic sssi property}

In this subsection, we show that the constructions from the previous subsection all lead to $p$-adic sssi processes.

\begin{theorem}\label{thm:basic examples}
The $p$-adic log-fractional stable motion, the $p$-adic linear fractional stable motion, and the $p$-adic  harmonizable fractional stable motion are $p$-adic sssi. Moreover, the BRW tree process is discrete $p$-adic sssi.
\end{theorem}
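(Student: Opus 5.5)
For each of the three continuous constructions, the plan is to run through the three steps listed after Proposition~\ref{prop:stable-test}: integrability, then \eqref{eq:test-increments}, then \eqref{eq:test-scale}. The two invariance identities come from two properties of Haar measure on $\Q_p$. First, $m$ is translation invariant. Second, for $a\in\Q_p^\times$ the substitution $x=ay$ gives $m(dx)=\abs{a}_p\,m(dy)$, together with $\abs{ay}_p=\abs a_p\abs y_p$.

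\emph{Log-fractional motion.} Put $f_t(x)=\log\abs{t-x}_p-\log\abs x_p$, so that $f_{t+h}(x)-f_h(x)=f_t(x-h)$. Translation invariance of $m$ then gives \eqref{eq:test-increments}. Under $x=ay$ we get $f_{at}(ay)=f_t(y)$. The factor $\abs a_p$ in the measure therefore yields \eqref{eq:test-scale} with $\alpha H=1$, that is, $H=1/\alpha$.

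For integrability, the kernel $f_t$ vanishes when $\abs x_p>\abs t_p$, by the ultrametric equality $\abs{t-x}_p=\abs x_p$. On the ball $\abs x_p\le\abs t_p$ the kernel is a sum of two terms of the form $\log\abs{\cdot}_p$. Each of these is in $L^\alpha$ of a ball, because $m(\{\abs y_p=p^{-k}\})=(1-p^{-1})p^{-k}$ and $\sum_k k^\alpha p^{-k}<\infty$.

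\emph{Linear fractional motion.} The kernel is $f_t(x)=\abs{t-x}_p^{\beta}-\abs x_p^{\beta}$ with $\beta=H-1/\alpha$. The same two substitutions apply, and now $f_{at}(ay)=\abs a_p^{\beta}f_t(y)$. Combined with the Jacobian this gives the factor $\abs a_p^{\alpha\beta+1}=\abs a_p^{\alpha H}$.

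For integrability, $f_t$ again vanishes when $\abs x_p>\abs t_p$. Near the points $0$ and $t$ the function $\abs y_p^{\alpha\beta}$ is integrable on a ball if and only if $\sum_k p^{-k(1+\alpha\beta)}<\infty$, i.e.\ $\alpha H>0$, which holds. The exclusion $H\ne1/\alpha$ only rules out a kernel that vanishes identically.

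\emph{Harmonizable motion.} Here I would use the complex analogue of \eqref{eq:stable-exponent}. For a complex isotropic measure, the characteristic function of $\operatorname{Re}\int g\,d\widetilde M_\alpha$ is $\exp(-c_\alpha\int\abs g^\alpha)$, so the same test applies with complex kernels and absolute values. Take $g_t(\xi)=(\chi(t\xi)-1)\abs\xi_p^{-H-1/\alpha}$. Since $\chi$ is a character, $g_{t+h}-g_h=\chi(h\xi)g_t(\xi)$. The factor $\chi(h\xi)$ is unimodular and the same for every $j$, so it drops out of $\abs{\sum_j\theta_j(\cdot)}^\alpha$, giving \eqref{eq:test-increments}. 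Scaling by $\xi=\eta/a$ gives $g_{at}(\eta/a)=\abs a_p^{H+1/\alpha}g_t(\eta)$ and $m(d\xi)=\abs a_p^{-1}m(d\eta)$, hence the factor $\abs a_p^{\alpha H}$.

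The integrability step is the main point to check. Since $\chi\equiv1$ on $\Z_p$, the kernel $g_t$ vanishes on $\abs\xi_p\le\abs t_p^{-1}$, so there is no singularity at $0$ at all. For large $\abs\xi_p$ the kernel is bounded by $2\abs\xi_p^{-H-1/\alpha}$. The $\alpha$-th power is then $\abs\xi_p^{-\alpha H-1}$, and its integral over $\{\abs\xi_p=p^k\}$ is of order $p^{-k\alpha H}$, which is summable. This is where $H>0$ is used, with no upper bound on $H$ needed.

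\emph{BRW tree process.} I would apply the discrete analogue of Proposition~\ref{prop:stable-test}; its proof is verbatim the same. The kernel at level $k$ is $p^{-kH}(\mathbf 1_{r\equiv n}-\mathbf 1_{r\equiv0})$ modulo $p^{k+1}$.

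Integrability: level $k$ contributes $2p^{-k\alpha H}$ when $p^{k+1}\nmid n$ and $0$ otherwise, and $\sum_k 2p^{-k\alpha H}<\infty$.

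Stationary increments: $f_{n+h}-f_h$ at level $k$ equals $p^{-kH}(\mathbf 1_{r\equiv n+h}-\mathbf 1_{r\equiv h})$. This is the kernel $f_n$ with $r$ replaced by $r-h$, and shifting $r\mapsto r-h$ is a bijection of $\Z/p^{k+1}\Z$ that preserves counting measure.

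Scaling: for $a=p^ju$ with $p\nmid u$, the condition $r\equiv an\pmod{p^{k+1}}$ is void for $k<j$, because then $an\equiv0$ and the two indicators cancel. For $k\ge j$ it is equivalent to $r=p^jr'$ with $r'\equiv un\pmod{p^{k+1-j}}$. Multiplication by $u$ is a bijection of residues, so it can be removed. Relabelling the level $k'=k-j$ produces the weight $p^{-(k'+j)H}=\abs a_p^{H}p^{-k'H}$.

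This reindexing is the one delicate bookkeeping step. I expect it to go through with a bijection between level-$k'$ residues and those level-$k$ residues divisible by $p^j$, with the remaining residues contributing $0$.
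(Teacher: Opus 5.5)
Your proposal is correct. For the log-fractional, linear fractional, and harmonizable motions it follows the paper's proof essentially step for step. The kernel vanishes off a ball (or, in the harmonizable case, on a neighborhood of $0$), and integrability is a shell-by-shell sum over $\{\abs{x}_p=p^j\}$. Stationarity comes from $f_{t+h}-f_h=f_t(\cdot-h)$ or from the unimodular factor $\chi(h\xi)$, and scaling from the substitution $x=ay$ with Jacobian $\abs{a}_p$.

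The genuine difference is the BRW tree process. There the paper simply invokes \cite[Proposition 4.2]{ShenZhang2021}, while you verify the discrete version of the stable test by hand. Your bookkeeping does close. Take $a=p^ju$ with $p\nmid u$. At levels $k<j$ the two indicators coincide, so the kernel is identically zero. At levels $k\ge j$, every residue $r\in\Z/p^{k+1}\Z$ not divisible by $p^j$ makes both indicators vanish. On the remaining residues
\[
  f_{an}(k,p^jr')=p^{-jH}f_n(k-j,u^{-1}r'),\qquad r'\in\Z/p^{k-j+1}\Z .
\]
Here $r'\mapsto p^jr'$ is a bijection onto the multiples of $p^j$, and $r'\mapsto u^{-1}r'$ permutes $\Z/p^{k-j+1}\Z$. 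Summing over $k\ge j$ therefore gives exactly $\abs{a}_p^{\alpha H}$ times the original exponent.

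What your route buys is a self-contained proof resting only on Proposition~\ref{prop:stable-test}. The paper's citation is shorter, and it draws on a result that in \cite{ShenZhang2021} is not restricted to stable noise.
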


\begin{proof}
\textbf{$p$-adic log-fractional stable motion.} Define
\begin{equation*}
  f_t^{\log}(x)=\log\abs{t-x}_p-\log\abs{x}_p .
\end{equation*}
We first check the \(L^\alpha\)-integrability of $f_t^{\log}$.  The case \(t=0\) is trivial, so we fix \(t\ne0\) and write \(\abs{t}_p=p^N\).

For \(\abs{x}_p>\abs{t}_p\), the ultrametric inequality gives
\(\abs{t-x}_p=\abs{x}_p\), so the kernel vanishes.  Thus, the only contribution arises from 
\(\{\abs{x}_p\le\abs{t}_p\}\). We further discuss two cases according to whether equality is achieved.

\begin{itemize}
    \item First, consider the region
\(\{\abs{x}_p<\abs{t}_p\}\).  In this case, 
\(\abs{t-x}_p=\abs{t}_p\), so \(
f_t^{\log}(x)=N\log p-\log |x|_p.
\) Let
\begin{align}
     S_j:=\{x\in\mathbb Q_p: |x|_p=p^j\}, \qquad j\in\Z.\label{eq:Sj}
\end{align}
On \(S_j\), \(
|f_t^{\log}(x)|^\alpha
=\bigl((N-j)\log p\bigr)^\alpha
\). Moreover, \(m(S_j)=(1-p^{-1})p^j\).
Thus, the contribution from \(\{\abs{x}_p<\abs{t}_p\}\) is
\[
\begin{aligned}
\int_{\{\abs{x}_p<\abs{t}_p\}} |f_t^{\log}(x)|^\alpha\,m(dx)
&=
\sum_{j=-\infty}^{N-1}
\int_{S_j} |f_t^{\log}(x)|^\alpha\,m(dx) \\
&=
\sum_{j=-\infty}^{N-1}
\bigl((N-j)\log p\bigr)^\alpha m(S_j) \\
&=
(1-p^{-1})
\sum_{j=-\infty}^{N-1}
p^j\bigl((N-j)\log p\bigr)^\alpha<\infty.
\end{aligned}
\]
\item Next, consider the region \(\{\abs{x}_p=\abs{t}_p\}\). On
\(\{\abs{t-x}_p<\abs{t}_p\}\), the change of variable \(x\mapsto t-x\)
shows that the same summable series above controls 
\(\abs{\log\abs{t-x}_p-\log\abs{t}_p}^\alpha\).  On the remaining part of
\(\{\abs{x}_p=\abs{t}_p\}\), both \(\abs{x}_p\) and \(\abs{t-x}_p\) equal
\(\abs{t}_p\), so the log kernel is zero.
\end{itemize}
Combining the two cases leads to \(f_t^{\log}\in L^\alpha(\Q_p,m)\).

The stationarity of increments follows simply from the facts that
\[
  f_{t+h}^{\log}(x)-f_h^{\log}(x)=f_t^{\log}(x-h)
\]
outside a Haar-null set, and that the Haar measure is translation invariant.

For self-similarity, let \(a\in\Q_p^\times\).  Then
\[
  f_{at}^{\log}(ay)
  =
  \log\abs{a(t-y)}_p-\log\abs{ay}_p
  =
  f_t^{\log}(y)
\]
outside a finite Haar-null set.  The change of variable \(x=ay\) leads to, for every \(r\ge1\), \(t_1,\ldots,t_r\in\Q_p\),
\(\theta_1,\ldots,\theta_r\in\R\),
\[
\begin{aligned}
\int_{\mathbb Q_p}\bigg|\sum_{j=1}^r \theta_j f_{a t_j}^{\log}(x)\bigg|^\alpha m(dx)
&=
\int_{\mathbb Q_p}\bigg|\sum_{j=1}^r \theta_j f_{a t_j}^{\log}(ay)\bigg|^\alpha m(d(ay)) \\
&=|a|_p\int_{\mathbb Q_p}\bigg|\sum_{j=1}^r \theta_j f_{t_j}^{\log}(y)\bigg|^\alpha m(dy).
\end{aligned}
\]
Proposition \ref{prop:stable-test} then proves the claim with \(H=1/\alpha\).

\smallskip

\textbf{\(p\)-adic linear fractional stable motion.} Define
\begin{equation*}
  f_t^H(x)
  =
  |t-x|_p^{H-1/\alpha}-|x|_p^{H-1/\alpha}.
\end{equation*}  We first check
\(L^\alpha\)-integrability.  The case \(t=0\) is trivial, so fix \(t\ne0\)
and write \(|t|_p=p^N\).

If \(|x|_p>|t|_p\), then \(|t-x|_p=|x|_p\), so \(f_t^H(x)=0\).  Therefore, only
\(\{|x|_p\le |t|_p\}\) contributes.  On \(\{|x|_p<|t|_p\}\), we have
\(|t-x|_p=|t|_p\), and hence
\[
  |f_t^H(x)|^\alpha
  =
  \left||t|_p^{H-1/\alpha}-|x|_p^{H-1/\alpha}\right|^\alpha
  \le C\left(1+|x|_p^{\alpha H-1}\right)
\]
for a constant \(C=C(t,\alpha,H)>0\).  With
\[
  S_j=\{x\in\Q_p: |x|_p=p^j\},\qquad m(S_j)=(1-p^{-1})p^j,
\]
and hence 
the possible singular contribution near \(0\) is bounded by
\[
  C\sum_{j=-\infty}^{N-1}p^{j(\alpha H-1)}m(S_j)
  =
  C(1-p^{-1})\sum_{j=-\infty}^{N-1}p^{j\alpha H}<\infty.
\]
The region near \(x=t\) is handled in the same way by the change of variable
\(y=t-x\).  On the remaining part of \(\{|x|_p=|t|_p\}\), both
\(|t-x|_p\) and \(|x|_p\) equal \(|t|_p\), so the kernel is zero.  Hence
\(f_t^H\in L^\alpha(\Q_p,m)\).

Stationarity of the increments follows from the fact that 
\(
  f_{t+h}^H(x)-f_h^H(x)=f_t^H(x-h)
\)
outside a Haar-null set, and the translation-invariance of the Haar measure.

For self-similarity, let \(a\in\Q_p^\times\).  Then $f_{at}^H(ay)
  =
  |a|_p^{H-1/\alpha} f_t^H(y).$ 
Therefore, for every \(r\ge1\), \(t_1,\ldots,t_r\in\Q_p\), and
\(\theta_1,\ldots,\theta_r\in\R\),
\[
\begin{aligned}
\int_{\Q_p}\bigg|\sum_{j=1}^r\theta_j f_{a t_j}^H(x)\bigg|^\alpha m(dx)
&=
|a|_p
\int_{\Q_p}\bigg|\sum_{j=1}^r\theta_j f_{a t_j}^H(ay)\bigg|^\alpha m(dy)\\
&=
|a|_p^{\alpha H}
\int_{\Q_p}\bigg|\sum_{j=1}^r\theta_j f_{t_j}^H(y)\bigg|^\alpha m(dy).
\end{aligned}
\]
Proposition~\ref{prop:stable-test} then concludes the proof.

\smallskip

\textbf{$p$-adic harmonizable fractional stable motion.} Let
\begin{equation*}
  F_t(\xi)=(\chi(t\xi)-1)\abs{\xi}_p^{-H-1/\alpha}.
\end{equation*}
For \(t=0\), the kernel is zero.  If \(t\ne0\), then \(\chi(t\xi)-1=0\)
whenever \(\abs{t\xi}_p\le1\).  Choose \(N\in\Z\) such that
\(\abs{t}_p^{-1}=p^N\). Also, note that \(|\chi(t\xi)-1|\le 2\) implies 
\(
|F_t(\xi)|^\alpha\le2^\alpha |\xi|_p^{-\alpha H-1}
\). 
Using the same notation \eqref{eq:Sj}, and recalling that
\(m(S_j)=(1-p^{-1})p^j\), we obtain
\[
\begin{aligned}
\int_{\mathbb Q_p}|F_t(\xi)|^\alpha\,m(d\xi)
&\le
2^\alpha\sum_{j=N+1}^{\infty}
\int_{S_j}|\xi|_p^{-\alpha H-1}\,m(d\xi) \\
&=
2^\alpha\sum_{j=N+1}^{\infty}
p^{-j(\alpha H+1)}m(S_j) \\
&=
2^\alpha(1-p^{-1})
\sum_{j=N+1}^{\infty}p^{-j\alpha H}
<\infty .
\end{aligned}
\]
Thus, \(F_t\in L^\alpha(\mathbb Q_p,m)\).

For stationarity of the increments, note that
\[
  F_{t+h}(\xi)-F_h(\xi)
  =
  \chi(h\xi)(\chi(t\xi)-1)\abs{\xi}_p^{-H-1/\alpha}.
\]
The extra factor \(\chi(h\xi)\) has modulus one, so \eqref{eq:test-increments}
holds.

For self-similarity, set \(\eta=a\xi\).  Then
\[
  F_{at}(\xi)
  =
  \abs{a}_p^{H+1/\alpha}F_t(\eta),
  \qquad
  m(d\xi)=\abs{a}_p^{-1}m(d\eta).
\]
Following a derivation similar to the counterpart in the above cases, this gives \eqref{eq:test-scale}.  Using a complex version of Proposition \ref{prop:stable-test} (see \cite[Theorem 6.3.1]{SamorodnitskyTaqqu1994}) and taking the real part completes the
proof.

\smallskip

\textbf{BRW tree process.} This follows directly from \cite[Proposition 4.2]{ShenZhang2021}.
\end{proof}

\begin{corollary}
    Taking $\alpha=2$ in Definitions \ref{prop:fractional-kernels}, \ref{def:direct-lfsm}, \ref{thm:harmonizable}, and \ref{def:brw-tree} leads to three different Gaussian-integral representations of (a constant multiple of) the $p$-adic fractional Brownian motion. (Note that Definitions \ref{prop:fractional-kernels} and \ref{def:direct-lfsm} together count as one representation, covering the cases \(H=1/\alpha\) and \(H\ne 1/\alpha\), respectively).
\end{corollary}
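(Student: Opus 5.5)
The plan is to reduce the corollary to a uniqueness statement for Gaussian $p$-adic sssi processes, and then check the non-degeneracy and the self-similarity index of each construction.

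\emph{Uniqueness.} When $\alpha=2$, each process in Definitions \ref{prop:fractional-kernels}, \ref{def:direct-lfsm}, \ref{thm:harmonizable}, \ref{def:brw-tree} is a centered Gaussian process. Its law is therefore determined by the covariance. Stationarity of increments gives
\[
\E\bigl[(X(t)-X(s))^2\bigr]=\E\bigl[X(t-s)^2\bigr].
\]
Self-similarity gives $\E[X(u)^2]=\abs{u}_p^{2H}\E[X(1)^2]$, because every $u\in\Q_p^\times$ equals $a\cdot 1$ with $a=u$. Polarization then yields
\[
\E[X(s)X(t)]=\tfrac{\sigma^2}{2}\bigl(\abs{s}_p^{2H}+\abs{t}_p^{2H}-\abs{t-s}_p^{2H}\bigr),\qquad \sigma^2=\E[X(1)^2].
\]
This is the covariance of the $p$-adic fractional Brownian motion \cite{BikulovVolovich1997}. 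Hence any Gaussian $p$-adic $H$-sssi process with $\sigma>0$ is a constant multiple of $p$-adic fBm with index $H$. By Theorem \ref{thm:basic examples}, the first three constructions are $p$-adic sssi. For the discrete BRW tree process, the same computation on $\Nzero$ identifies the discrete skeleton. Lemma \ref{lem:skeleton-determines} then gives the unique extension to $\Q_p$, which must coincide in law with (a multiple of) $p$-adic fBm, since the skeleton of fBm has the same covariance.

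\emph{Non-degeneracy.} It remains to check $\sigma>0$ in each case, i.e., that the kernel at $t=1$ is not a.e.\ zero.
\begin{itemize}
\item For the log and linear fractional kernels, on $\{\abs{x}_p<1\}$ the kernel equals $-\log\abs{x}_p$, resp.\ $1-\abs{x}_p^{H-1/2}$, which is nonzero on a set of positive measure because $H\neq 1/2$ in the linear fractional case.
\item For the harmonizable kernel, $\chi(\xi)\neq1$ on $\{\abs{\xi}_p=p\}$ for all but a null set; indeed, for $\xi=p^{-1}u$ with $u\in\Z_p^\times$ we have $u\bmod p\neq 0$.
\item For the BRW tree kernel, the $k=0$ level gives indicator differences at $r=1$ and $r=0$, which are distinct residues mod $p$, so the contribution is nonzero.
\end{itemize}

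\emph{Ranges of $H$.} The log case has $H=1/\alpha=1/2$ and the linear fractional case covers $H\neq1/2$, so together they give one representation for every $H>0$. The harmonizable construction also works for every $H>0$. The BRW tree process has covariance $\propto\abs{\cdot}_p^{2H}$ with its own $H$, but fBm is only positive definite in a restricted range. I would simply note that each representation realizes fBm for the $H$ at which it is defined.

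\emph{Main obstacle.} The main point is matching the BRW tree (discrete) construction to the continuous one. I would handle it through Lemma \ref{lem:skeleton-determines}, together with the observation that both skeletons have the same Gaussian covariance.
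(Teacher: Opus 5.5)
Your argument is correct and matches the reasoning the paper leaves implicit. Theorem~\ref{thm:basic examples} supplies the (discrete) $p$-adic sssi property; for $\alpha=2$, stationary increments and self-similarity then force the covariance $\tfrac{\sigma^2}{2}\bigl(\abs{s}_p^{2H}+\abs{t}_p^{2H}-\abs{t-s}_p^{2H}\bigr)$, and Lemma~\ref{lem:skeleton-determines} (which gives uniqueness, with existence supplied by fBm itself) transfers the BRW skeleton identification to $\Q_p$.

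One correction: drop the aside that $p$-adic fBm is positive definite only for a restricted range of $H$. On $\Q_p$ this covariance is positive definite for every $H>0$, as the harmonizable construction, valid for all $H>0$ as you yourself note, already shows; so there is no range restriction to worry about.
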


\section{Decorated processes}\label{sec:decorated}

In this section, we provide generalizations of the examples from Section \ref{sec:simple} with ``decoration'', the exact meaning of which will be clear as the section unfolds. We then apply Proposition \ref{prop:stable-test} to prove the $p$-adic sssi property.

\subsection{Constructions and basic observations}

We start from the decorated $p$-adic linear fractional stable motion. Fix integers \(c,L\ge1\), and denote the uniform probability
measure on \((\Z/L\Z)\times(\Z_p^\times/(1+p^c\Z_p))\) by \(\nu_{L,c}\).
Intuitively, the group \(\Z_p^\times/(1+p^c\Z_p) \simeq(\Z/p^c\Z)^\times\) represents \(p\)-adic unit directions modulo \(p^c\). 
For \(z\ne0\), we define its unit direction modulo \(p^c\) by
\[
  \omega_c(z)=p^{-v_p(z)}z\pmod{1+p^c\Z_p}\in
  \Z_p^\times/(1+p^c\Z_p).
\]
Intuitively, \(\omega_c(z)\) records the unit part of \(z\) up to \(c\) \(p\)-adic
digits.

\begin{definition}[Decorated $p$-adic linear fractional stable motion]\label{thm:linear-fractional}
Let \(0<\alpha\le2\), \(H>0\), and
\(\psi:(\Z/L\Z)\times(\Z_p^\times/(1+p^c\Z_p))\to\R\) be any function.  For
\((s,\gamma)\in(\Z/L\Z)\times(\Z_p^\times/(1+p^c\Z_p))\), define
\[
  G_{s,\gamma}(z)=
  \begin{cases}
    \abs{z}_p^{H-1/\alpha}
    \psi\bigl(s+v_p(z)\!\!\pmod L,\gamma\omega_c(z)\bigr),&z\ne0,\\
    0,&z=0.
  \end{cases}
\]
Let \(M_\alpha\) be a real symmetric \(S\alpha S\) random measure on
\(\Q_p\times(\Z/L\Z)\times(\Z_p^\times/(1+p^c\Z_p))\) with control measure
\(m\otimes\nu_{L,c}\).  The decorated $p$-adic linear fractional stable motion is defined as
\[
  X(t)=\int_{\Q_p\times(\Z/L\Z)\times(\Z_p^\times/(1+p^c\Z_p))}
       \left(G_{s,\gamma}(t-x)-G_{s,\gamma}(-x)\right)\,
       M_\alpha(dx,ds,d\gamma).
\]
\end{definition}

Compared to Definition \ref{def:direct-lfsm}, the decorated processes introduce more degrees of freedom by allowing \(\psi\) to depend on the $p$-adic valuation and unit direction. It is also clear that $\psi=1$ recovers the $p$-adic linear fractional stable motion if $H\neq 1/\alpha$. 
\smallskip

Next, we define the decorated BRW tree processes. 
Let \(r,L\ge1\),
\[
  E=\bigsqcup_{k\in\Z}(\Q_p/p^{k+r}\Z_p)\times(\Z_p/p^r\Z_p)^\times\times(\Z/L\Z),
\]
and equip \(E\) with
\[
  \mu=\sum_{k\in\Z}\#_{\Q_p/p^{k+r}\Z_p}\otimes\nu_{(\Z_p/p^r\Z_p)^\times}\otimes\nu_L,
\]
where \(\#_{\Q_p/p^{k+r}\Z_p}\) is counting measure and
\(\nu_{(\Z_p/p^r\Z_p)^\times}\), \(\nu_L\) are uniform probability measures.
For \(z\in\Q_p/p^{k+r}\Z_p\), if \(x\in z+p^k\Z_p\),
define
\[
  \ell_{k,z}^{(r)}(x)=p^{-k}(x-z)\pmod{p^r\Z_p}\in \Z_p/p^r\Z_p .
\]
Here \(z\) denotes a coset modulo \(p^{k+r}\Z_p\); the ball \(z+p^k\Z_p\) and
the label \(\ell_{k,z}^{(r)}\) are independent of the chosen representative
because \(p^{k+r}\Z_p\subset p^k\Z_p\).
\begin{definition}[Decorated BRW tree process]\label{thm:tree-block}
Let  \(0<\alpha\le2\), \(H>0\), and
\(W:(\Z/L\Z)\times(\Z_p/p^r\Z_p)\to\R\) be any function.

For \(u\in(\Z_p/p^r\Z_p)^\times\) and \(s\in\Z/L\Z\), set
\[
  \Phi_{k,z,u,s}(x)=
  \begin{cases}
    W(s+k\!\!\pmod L,u\ell_{k,z}^{(r)}(x)),&x\in z+p^k\Z_p,\\
    0,&x\notin z+p^k\Z_p.
  \end{cases}
\]
If \(M_\alpha\) is a real
symmetric \(S\alpha S\) random measure on \(E\) with control measure \(\mu\),
then the (stable) decorated BRW tree process is defined as
\begin{align}
     X(t)=\int_E p^{-kH}\bigl(\Phi_{k,z,u,s}(t)-\Phi_{k,z,u,s}(0)\bigr)
       M_\alpha(dk,dz,du,ds).\label{eq:tree kernel}
\end{align}
\end{definition}

\sloppy The map $W$ carries extra degrees of freedom compared to the BRW tree process (Definition \ref{def:brw-tree}), which is contained in the decorated BRW tree class after restriction to
\(\Nzero\).  Indeed, take \(r=L=1\) and $W(\cdot,a)=\mathbf 1_{\{a=0\}}$ in Definition~\ref{thm:tree-block}. 
Then, for every \(u\in(\Z_p/p\Z_p)^\times\),
$\Phi_{k,z,u}(x)
  =
  \mathbf 1_{\{x\equiv z\,\pmod {p^{k+1}\Z_p}\}}$.
For \(n\in\Nzero\), the levels \(k<0\) have no contribution, because
\(n\equiv0\pmod {p^{k+1}\Z_p}\).  At the levels \(k\ge0\), only
\(z\in \Z_p/p^{k+1}\Z_p\) can contribute, and the kernel \eqref{eq:tree kernel} becomes
\[
  p^{-kH}
  \left(
    \mathbf 1_{\{z\equiv n\!\!\!\pmod {p^{k+1}\Z_p}\}}
    -
    \mathbf 1_{\{z\equiv0\!\!\!\pmod {p^{k+1}\Z_p}\}}
  \right),
\]
which is exactly the BRW tree kernel \eqref{eq:brw-tree} in Definition~\ref{def:brw-tree}.

\smallskip

Finally, we define the decorated $p$-adic harmonizable fractional stable motion, which is a discrete-time process. Let $L\geq 1$. 
 For
\(k\ge1\), let \(\nu_{(\Z/p^k\Z)^\times}\) be the uniform probability measure on
\((\Z/p^k\Z)^\times\).  Define
\[
  E=\bigsqcup_{k=1}^\infty \T\times(\Z/p^k\Z)^\times\times(\Z/L\Z)
  ~~\text{ and }~~
  \mu=\sum_{k=1}^\infty
  \lambda_{\T}\otimes\nu_{(\Z/p^k\Z)^\times}\otimes\nu_L,
\]
where \(\lambda_{\T}\) is Haar probability measure on \(\T=\R/\Z\).
\begin{definition}[Decorated $p$-adic harmonizable fractional stable motion]\label{thm:packet}
Let \(0<\alpha\le2\), \(H>0\), and 
\(\phi_s:\T\to\R\), \(s\in\Z/L\Z\) be bounded measurable functions.   For
\(n\in\Nzero\), define
\begin{equation}\label{eq:packet-kernel}
  f_n(k,r,u,s)
  =
  p^{-kH}
  \left(
    \phi_{s+k}\left(r+\frac{un}{p^k}\right)-\phi_{s+k}(r)
  \right),
\end{equation}
where \(s+k\) is modulo \(L\), and \(u n/p^k\) denotes the class in
\(\T\) obtained from any integer representative of
\(u\in(\Z/p^k\Z)^\times\).  Let
\(M_\alpha\) be a real symmetric \(S\alpha S\) random measure on \(E\) with
control measure \(\mu\). Then, the decorated $p$-adic harmonizable fractional stable motion is defined as 
\[
  X_n=\int_E f_n\,dM_\alpha,\qquad n\in\Nzero.
\]
\end{definition}

Again, the choices of $\phi_s,\,s\in \Z/L\Z$ offer extra degrees of freedom. 
To see that the $p$-adic harmonizable fractional stable motion (Definition \ref{thm:harmonizable}) is a special case, take \(L=1\) and set $\phi(r)=(1-p^{-1})^{1/\alpha}\cos(2\pi r).$ 
Then
\begin{align}
     f_n(k,r,u)
  =
  (1-p^{-1})^{1/\alpha}p^{-kH}
  \operatorname{Re}\left(
    e^{2\pi i r}\left(e^{2\pi iun/p^k}-1\right)
  \right).\label{eq:fn}
\end{align}
Now, restrict the $p$-adic harmonizable fractional stable motion to \(n\in\Nzero\).  Its kernel is zero
on \(|\xi|_p\le1\), so only \(|\xi|_p=p^k\), \(k\ge1\) contribute. For $\xi\in S_k$,  
the change of variable \(\xi=p^{-k}u\), \(u\in(\Z/p^k\Z)^\times\) gives
\[
 ( \chi(n\xi)-1)|\xi|_p^{-H-1/\alpha}
  =
  p^{-kH-k/\alpha}(e^{2\pi iun/p^k}-1).
\]
Multiplying by the scaling factor \(m(S_k)^{1/\alpha}=(1-p^{-1})^{1/\alpha}p^{k/\alpha}\) matches  \eqref{eq:fn}, where the factor $e^{2\pi i r}$ arises from rewriting complex isotropic stable noise as real stable noise with a uniform phase $r\in\T$.

\subsection{Verifying the $p$-adic sssi property}

In this section, we show that the constructions in Definitions \ref{thm:linear-fractional}--\ref{thm:packet} all lead to $p$-adic sssi processes.

\begin{theorem}\label{thm:decorated examples}
The decorated $p$-adic linear fractional stable motion and the decorated BRW tree process are $p$-adic sssi. Moreover, the decorated $p$-adic harmonizable fractional stable motion is discrete $p$-adic sssi.
\end{theorem}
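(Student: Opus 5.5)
We verify each of the three constructions with the three-step scheme following Proposition~\ref{prop:stable-test}: integrability of the kernels, then \eqref{eq:test-increments}, then \eqref{eq:test-scale}. For the harmonizable case we use the discrete version of the test, with $a\in\N$ and $h\in\Nzero$. In every case the strategy is the same. Translation and dilation of time are absorbed by a \emph{measure-preserving bijection} of the parameter space $E$, which acts on the decoration coordinates as well as on the base coordinate. Every integrand $|\sum_j\theta_j f_{t_j}|^\alpha$ is then transported at once, for all $r,\theta_j,t_j$. It is this shifting of the decoration labels, by $s\mapsto s+m$ and by multiplication of unit directions, that makes arbitrary $\psi,W,\phi_s$ admissible.

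\textbf{Decorated linear fractional stable motion.} Since $\psi$ takes finitely many values, it is bounded. Suppose $|x|_p>p^c|t|_p$. Then $t-x$ and $-x$ have the same valuation, and $p^{-v_p(x)}(t-x)\equiv p^{-v_p(x)}(-x) \pmod{p^c}$, so $\omega_c(t-x)=\omega_c(-x)$ and the kernel vanishes. What remains is a bounded region. There the kernel is bounded by $C(|t-x|_p^{H-1/\alpha}+|x|_p^{H-1/\alpha})$, which is integrable near $0$ and near $t$ because $H>0$, by the same shell sum over the $S_j$ as in Theorem~\ref{thm:basic examples}. Increments follow from the identity
\[
\bigl(G(t+h-x)-G(-x)\bigr)-\bigl(G(h-x)-G(-x)\bigr)=G(t-(x-h))-G(-(x-h))
\]
together with translation invariance of $m$. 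For scaling, write $a=p^m v$ with $v\in\Z_p^\times$. Then
\[
G_{s,\gamma}(az)=|a|_p^{H-1/\alpha}\,G_{s+m,\;\gamma\omega_c(v)}(z).
\]
The map $(x,s,\gamma)\mapsto(x/a,\,s+m,\,\gamma\omega_c(v))$ multiplies $m\otimes\nu_{L,c}$ by $|a|_p^{-1}$. Combined with the prefactor this gives the total factor $|a|_p^{\alpha H}$.

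\textbf{Decorated BRW tree process.} For translation, note that $\Phi_{k,z,u,s}(t+h)=\Phi_{k,z-h,u,s}(t)$ and that $z\mapsto z-h$ permutes $\Q_p/p^{k+r}\Z_p$. The extra term $\Phi(h)$ cancels, exactly as in the linear fractional case. For scaling with $a=p^m v$, we have $x\in z+p^k\Z_p$ if and only if $ax\in az+p^{k+m}\Z_p$, and in that case $\ell^{(r)}_{k+m,az}(ax)=v\,\ell^{(r)}_{k,z}(x)$. Hence
\[
p^{-(k+m)H}\Phi_{k+m,az,u,s}(at)=|a|_p^H\,p^{-kH}\,\Phi_{k,z,uv,s+m}(t),
\]
where we used $p^{-mH}=|a|_p^H$. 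The map $(k,z,u,s)\mapsto(k+m,az,u,s)$, read together with the relabelling $u\mapsto uv$, $s\mapsto s+m$, is a bijection preserving $\mu$: it preserves counting measure and the uniform laws.

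For integrability, fix $t$. The difference $\Phi(t)-\Phi(0)$ can be nonzero only when the label difference $p^{-k}t \bmod p^r$ is nonzero or when $t\notin p^k\Z_p$. Both situations force $k>v_p(t)-r$. At each such level, only the $2p^r$ cosets $z$ whose balls contain $0$ or $t$ contribute. Since $W$ is bounded, the $L^\alpha$ norm is at most $C\sum_{k>v_p(t)-r}p^{-k\alpha H}<\infty$.

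\textbf{Decorated harmonizable motion.} Because the $\phi_s$ are bounded and $\mu$ has mass $1$ per level, we get $\|f_n\|_\alpha^\alpha\le C\sum_k p^{-k\alpha H}<\infty$. Increments follow from the rotation $r\mapsto r+uh/p^k$ of $\T$, which preserves $\lambda_\T$. For scaling, write $a=p^m b$ with $p\nmid b$. Levels $k\le m$ vanish, because $uan/p^k\in\Z$. For $k>m$, we have $uan/p^k=(ub)n/p^{k-m}$. The map
\[
(k,r,u,s)\mapsto\bigl(k-m,\;r,\;ub \bmod p^{k-m},\;s+m\bigr)
\]
pushes $\nu_{(\Z/p^k\Z)^\times}$ forward to $\nu_{(\Z/p^{k-m}\Z)^\times}$ and satisfies $s+k=(s+m)+(k-m)$. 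It therefore yields $f_{an}=|a|_p^H f_n\circ(\text{map})$ with $\mu$ preserved.

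The main obstacle is bookkeeping rather than estimates. One has to check that each transported kernel is again of the same decorated form, with the decoration labels shifted consistently: $s+v_p$, $s+k$, and the unit multiplications. One also has to check that the pushforwards of the uniform measures on the unit groups are again uniform. The most delicate instance is the projection $(\Z/p^k\Z)^\times\to(\Z/p^{k-m}\Z)^\times$ in the harmonizable case, which works because it is a surjective group homomorphism with fibers of equal size.
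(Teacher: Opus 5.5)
Your proposal is correct and follows essentially the same route as the paper: you run the three-step check of Proposition~\ref{prop:stable-test}, absorbing translations and dilations into measure-preserving relabellings that also shift the decoration coordinates ($s\mapsto s+v_p(a)$, multiplication of unit directions, and the uniform pushforward under reduction $(\Z/p^k\Z)^\times\to(\Z/p^{k-m}\Z)^\times$, which is a measure-preserving surjection rather than a bijection, as you note). The differences are cosmetic: you run the BRW relabelling forward, $(k,z)\mapsto(k+m,az)$, instead of the paper's $(k-v_p(a),a^{-1}z)$, and in the harmonizable case you handle $a=p^mb$ in one step rather than treating the coprime factor and multiplication by $p$ separately.
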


\begin{proof}
\textbf{Decorated $p$-adic linear fractional stable motion. }Define
\[
  F_t(x,s,\gamma)=G_{s,\gamma}(t-x)-G_{s,\gamma}(-x).
\]
If \(t=0\), then \(F_t=0\), so suppose that \(t\ne0\).  If
\(\abs{x}_p>p^c\abs{t}_p\), then $1-\frac tx\in1+p^c\Z_p$, and hence \(v_p(t-x)=v_p(-x)\) and \(\omega_c(t-x)=\omega_c(-x)\). It follows that
\begin{equation*}
\abs{x}_p>p^c\abs{t}_p \implies  F_t(x,s,\gamma)=0.
\end{equation*}

Suppose now that  \(\abs{x}_p\leq p^c\abs{t}_p\). Using the bound
\[
  \abs{G_{s,\gamma}(z)}^\alpha
  \le
  \left(\max_{(s,\gamma)\in(\Z/L\Z)\times(\Z_p^\times/(1+p^c\Z_p))}
  \abs{\psi(s,\gamma)}\right)^\alpha\abs{z}_p^{\alpha H-1}
\]
 and the same $S_j$ as defined in \eqref{eq:Sj}, we have for every $N\in\Z$,
\begin{align*}
  \int_{\abs{z}_p\le p^N}\abs{z}_p^{\alpha H-1}\,m(dz)
  &= \sum_{j=-\infty}^N p^{j(\alpha H-1)}m(S_j)=(1-p^{-1})\sum_{j=-\infty}^{N}p^{j\alpha H}<\infty,
  \end{align*}
where we recall  \(m(S_j)=(1-p^{-1})p^j\). It then follows that \(F_t\in L^\alpha\), using a change of variable.

For stationarity of the increments, note that
\begin{equation*}
  F_{t+h}(x,s,\gamma)-F_h(x,s,\gamma)=F_t(x-h,s,\gamma).
\end{equation*}
Translation invariance of \(m\) gives \eqref{eq:test-increments}.

For self-similarity, let \(a\in\Q_p^\times\), and define
\[
  \bar a=p^{-v_p(a)}a\pmod{1+p^c\Z_p}\in
  \Z_p^\times/(1+p^c\Z_p).
\]
For \(z\ne0\), $v_p(az)=v_p(a)+v_p(z)$ and $\omega_c(az)=\bar a\,\omega_c(z),$ which implies $G_{s,\gamma}(az)=\abs{a}_p^{H-\frac{1}{\alpha}}G_{s+v_p(a),\gamma\bar a}(z).$ 
Therefore,
\begin{equation}\label{eq:lfsm-scale-kernel}
\begin{split}
  F_{at}(ay,s,\gamma)
  &=G_{s,\gamma}\left(a(t-y)\right)-G_{s,\gamma}\left(a(-y)\right)\\
&=|a|_p^{H-1/\alpha}\left(G_{s+v_p(a),\gamma\bar a}(t-y)-G_{s+v_p(a),\gamma\bar a}(-y)\right)\\
&=|a|_p^{H-1/\alpha}F_t(y,s+v_p(a),\gamma\bar a).
  \end{split}
\end{equation}
The map \((s,\gamma)\mapsto(s+v_p(a),\gamma\bar a)\) is a measure-preserving permutation of
\((\Z/L\Z)\times(\Z_p^\times/(1+p^c\Z_p))\). 
Since \(m(d(ay))=\abs{a}_p\,m(dy)\),
\eqref{eq:lfsm-scale-kernel} gives \eqref{eq:test-scale} after a change of variables \(x=ay\), similarly as in the proof of Theorem \ref{thm:basic examples}.  Proposition
\ref{prop:stable-test} then proves the theorem.

\smallskip

\textbf{Decorated BRW tree process.} If \(t=0\), the kernel is zero.  Thus, we focus on the case where \(t\neq 0\) below. For
\(k\le v_p(t)-r\), one has \(t\in p^{k+r}\Z_p\subset p^{k}\Z_p\).  Therefore, for
every \(z\in\Q_p/p^{k+r}\Z_p\), \(0\in z+p^k\Z_p\) if and only if \(t\in z+p^k\Z_p\).  In this case,
\[
  \ell_{k,z}^{(r)}(t)-\ell_{k,z}^{(r)}(0)
  =
  p^{-k}t\equiv0\pmod{p^r\Z_p},
\]
so \(\Phi_{k,z,u,s}(t)-\Phi_{k,z,u,s}(0)=0\).  Thus, the level-\(k\)
contribution is zero.

For each fixed level \(k>v_p(t)-r\), \(t\)
belongs to exactly \(p^r\) balls \(z+p^k\Z_p\), because \(z\) is determined
modulo \(p^k\Z_p\) and then has \(p^r\) lifts modulo \(p^{k+r}\Z_p\).  Thus, at
most \(2p^r\) values of \(z\) contribute, and
\begin{align*}
  &\int_E\left|p^{-kH}(\Phi_{k,z,u,s}(t)-\Phi_{k,z,u,s}(0))\right|^\alpha d\mu\\
  &\qquad\le
  2p^r\left(2\max_{(q,a)\in(\Z/L\Z)\times(\Z_p/p^r\Z_p)}
  \abs{W(q,a)}\right)^\alpha
  \sum_{k=v_p(t)-r+1}^\infty p^{-k\alpha H}<\infty ,
\end{align*}
which proves the integrability.

For stationarity of the increments, observe that
\[
  \Phi_{k,z,u,s}(x+h)=\Phi_{k,z-h,u,s}(x).
\]
The map \(z\mapsto z-h\) is a permutation of \(\Q_p/p^{k+r}\Z_p\), so
\eqref{eq:test-increments} follows.

For self-similarity, let \(a\in\Q_p^\times\), and let
\[
  \bar a=p^{-v_p(a)}a\pmod{p^r\Z_p}\in(\Z_p/p^r\Z_p)^\times.
\]
Note that
\[
ax \in z+p^k\Z_p
\Longleftrightarrow
x \in a^{-1}z + p^{k-v_p(a)}\mathbb{Z}_p,
\]
\[
u\,\ell^{(r)}_{k,z}(ax)
=
u\bar a\,\ell^{(r)}_{k-v_p(a),a^{-1}z}(x),
\]
and
\[
(s+v_p(a))+(k-v_p(a)) \equiv s+k \pmod L.
\]
Then
\[
  \Phi_{k,z,u,s}(ax)=
  \Phi_{k-v_p(a),a^{-1}z,u\bar a,s+v_p(a)}(x).
\]
Since
\[
  p^{-kH}=\abs{a}_p^H p^{-(k-v_p(a))H},
\]
and \((k,z,u,s)\mapsto(k-v_p(a),a^{-1}z,u\bar a,s+v_p(a))\) preserves the control
measure, \eqref{eq:test-scale} follows.  Proposition \ref{prop:stable-test}
finishes the proof of this part.

\smallskip

\textbf{Decorated $p$-adic harmonizable fractional stable motion.} Since
\[
  \int_E |f_n|^\alpha\,d\mu
  \le
  \left(2\max_s\|\phi_s\|_\infty\right)^\alpha
  \sum_{k=1}^\infty p^{-k\alpha H}<\infty ,
\]
 the stable integral is well-defined. 

 We now prove stationarity of the increments.  Fix
\(n_1,\ldots,n_m\in\Nzero\), \(\theta_1,\ldots,\theta_m\in\R\), and
\(h\in\Nzero\).  For fixed \(k,u,s\),
\[
  f_{n+h}(k,r,u,s)-f_h(k,r,u,s)
  =
  f_n\left(k,r+\frac{uh}{p^k},u,s\right).
\]
Hence, by translation invariance of Haar measure on \(\T\),
\[
  \int_{\T}\bigg|\sum_{j=1}^m\theta_j
       \{f_{n_j+h}(k,r,u,s)-f_h(k,r,u,s)\}\bigg|^\alpha dr
  =\int_{\T}\bigg|\sum_{j=1}^m\theta_j
       f_{n_j}(k,r,u,s)\bigg|^\alpha dr .
\]
After integrating over \(u,s\) and summing over \(k\), the stable exponent
formula \eqref{eq:stable-exponent} gives stationarity of the increments.

For self-similarity,  first let \(q\in\N\) be coprime to \(p\).  Multiplication by \(q\) is a bijection of
each \((\Z/p^k\Z)^\times\), and $f_{q n}(k,r,u,s)=f_n(k,r,uq,s).$ 
Therefore, \((X_{qn})_{n\ge0}\dd (X_n)_{n\ge0}\). 
It remains to check multiplication by \(p\).  For \(k=1\), \(upn/p=un\) is
zero in \(\T\), so \(f_{pn}(1,r,u,s)=0\).  For \(k\ge2\), let
\(\pi_k:(\Z/p^k\Z)^\times\to(\Z/p^{k-1}\Z)^\times\) be reduction modulo
\(p^{k-1}\).  Then
\[
  f_{pn}(k,r,u,s)
  =
  p^{-H}f_n(k-1,r,\pi_k(u),s+1).
\]
The pushforward of \(\nu_{(\Z/p^k\Z)^\times}\) by \(\pi_k\) is
\(\nu_{(\Z/p^{k-1}\Z)^\times}\).  Summing over \(k\ge2\) and shifting the index therefore gives
\[
  \int_E\bigg|\sum_{j=1}^m\theta_j f_{p n_j}\bigg|^\alpha d\mu
  =
  p^{-\alpha H}
  \int_E\bigg|\sum_{j=1}^m\theta_j f_{n_j}\bigg|^\alpha d\mu .
\]
It follows that \((X_{pn})_{n\ge0}\dd(p^{-H}X_n)_{n\ge0}\).  This finishes the proof.
\end{proof}

\begin{remark}\label{rem:log+?}
In principle, one can add decorations to the logarithmic kernel in Definition \ref{prop:fractional-kernels} by setting
\[
  G_{s,\gamma}(z)
  =
  \lambda\log |z|_p
  +
  \psi\bigl(s+v_p(z)\!\!\!\pmod L,\gamma\omega_c(z)\bigr),
  \qquad z\ne0,~\lambda\in\R,
\]
and then using the kernel $G_{s,\gamma}(t-x)-G_{s,\gamma}(-x)$. Such a decoration is additive instead of multiplicative. 
However, this construction is just a superposition of the $p$-adic log-fractional stable motion and the decorated $p$-adic linear fractional stable motion, because the $\psi$ term is the
decorated linear-fractional kernel at the critical index \(H=1/\alpha\)
in Definition~\ref{thm:linear-fractional}. 
We therefore do not consider it as a new example.
\end{remark}

\section{
Comparison of the constructions}\label{sec:4}

This section compares the constructions in this paper. Our main result is Theorem \ref{thm:four-noncontainment} below, which asserts that there is no strict containment among the classes of  \(p\)-adic log-fractional stable motions, decorated $p$-adic linear fractional stable motions, decorated BRW tree processes, and decorated $p$-adic harmonizable fractional stable motions. Our comparison tests are based on the self-similarity index $H$, the support of the projective spectral measure, and the finiteness of the minimal spectral supremum, corresponding respectively to the first three subsections. The precise definitions will be given below.

\subsection{Elementary tools}

We first collect a few elementary observations that reduce the comparison to the same $H>0$ and to the discrete skeleton, whenever necessary.

\begin{lemma}
\label{lem:skeleton-determines}
Let \(X\) and \(Y\) be \(p\)-adic \(H\)-sssi processes, then \(X\) and \(Y\) have the same finite-dimensional distributions on
\(\Q_p\) if and only if
\begin{align}
    \{X(n)\}_{n\in\Nzero}\dd \{Y(n)\}_{n\in\Nzero}.\label{eq:xnyn}
\end{align}
\end{lemma}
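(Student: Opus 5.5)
The forward direction is immediate, since restricting equal finite-dimensional distributions on \(\Q_p\) to \(\Nzero\subset\Q_p\) preserves equality. For the converse, the plan is to show that the law of any finite vector \((X(t_1),\ldots,X(t_r))\) with \(t_j\in\Q_p\) is determined by the law of \(\{X(n)\}_{n\in\Nzero}\) through the two symmetries \eqref{eq:sssi-scale}--\eqref{eq:sssi-increments} together with a limiting procedure.

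First, fix \(t_1,\ldots,t_r\in\Q_p\). Choose \(k\in\Nzero\) with \(p^k t_j\in\Z_p\) for all \(j\). Self-similarity with \(a=p^k\) gives
\[
(X(p^kt_1),\ldots,X(p^kt_r))\dd p^{-kH}(X(t_1),\ldots,X(t_r)),
\]
and the same holds for \(Y\). It therefore suffices to treat \(t_j\in\Z_p\).

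Next, fix \(t_j\in\Z_p\) and let \(n_j^{(m)}\in\Nzero\) be the truncation of \(t_j\) modulo \(p^m\), so that \(\abs{t_j-n_j^{(m)}}_p\le p^{-m}\). The sssi property implies stochastic continuity in the \(p\)-adic topology. Indeed, by stationarity of increments and self-similarity,
\[
X(t)-X(s)\dd X(t-s)\dd \abs{t-s}_p^H X(u),
\]
where \(u\) is a unit with \(t-s=p^{v_p(t-s)}u\). Because \(X(u)\) need not have a law independent of \(u\), I would handle this via the discrete skeleton instead: \(u\) varies over the compact set \(\Z_p^\times\), and scaling by units shows \(X(u)\dd X(1)\) for every unit \(u\), since multiplication by a unit has \(\abs{\cdot}_p=1\) in \eqref{eq:sssi-scale}. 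Hence \(X(t)-X(s)\dd\abs{t-s}_p^HX(1)\to0\) in probability as \(\abs{t-s}_p\to0\), using \(H>0\). Consequently \((X(n_1^{(m)}),\ldots,X(n_r^{(m)}))\to(X(t_1),\ldots,X(t_r))\) in probability, and likewise for \(Y\). The approximating vectors have equal laws by \eqref{eq:xnyn}, and limits in distribution are unique, so the finite-dimensional laws agree.

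The main point to get right is the continuity step, specifically the claim \(X(u)\dd X(1)\) for units, which follows directly from \eqref{eq:sssi-scale} with \(a=u\); beyond that the argument is routine. Existence of an extension, referenced in the paper via \cite[Theorem~3.9]{ShenZhang2021}, is not needed for this uniqueness statement.
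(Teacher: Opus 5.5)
Your proposal is correct and follows essentially the paper's argument: stochastic continuity from $X(t)-X(s)\dd\abs{t-s}_p^H X(1)$, density of $\Nzero$ in $\Z_p$ to get equality of laws on $\Z_p$, and self-similarity to pass to $\Q_p$. The only differences are that you do the scaling reduction first rather than last, and that your detour through units is unnecessary. Applying \eqref{eq:sssi-scale} with $a=t-s$ at time $1$ gives $X(t-s)\dd\abs{t-s}_p^H X(1)$ in one step, and your aside that $X(u)$ ``need not have a law independent of $u$'' is contradicted by your very next line, so you should drop it.
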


\begin{proof}
We first prove the equivalence in distribution on \(\Z_p\).   That the equality in distribution on \(\Z_p\) implies the equality in distribution on \(\Nzero\) is trivial. For the other direction, since \(\Nzero\) is dense in \(\Z_p\), for each
\(t\in\Z_p\) we can choose integers \(t_j\to t\) in the \(p\)-adic topology.
The $p$-adic sssi property gives
\[
  X(t_j)-X(t)
  \dd
  \abs{t_j-t}_p^H X(1)\overset{\mathrm{p}}{\Rightarrow}0\quad\text{ as }j\to\infty.
\]
The same argument applies to \(Y\).  For finitely many
\(t_1,\ldots,t_m\in\Z_p\), taking integer approximations coordinatewise and
using a union bound gives
\[
  (X(t^{(1)}_j),\ldots,X(t^{(m)}_j))
  \overset{\mathrm{p}}{\Rightarrow}(X(t_1),\ldots,X(t_m))
\] and similarly for \(Y\).  Therefore,  \eqref{eq:xnyn} leads to
\[
  (X(t_1),\ldots,X(t_m))\dd (Y(t_1),\ldots,Y(t_m)).
\]
The equivalence on $\Q_p$ follows directly by scaling using self-similarity.
\end{proof}

The following result is the $p$-adic analogue of \cite[Theorem 1.1]{EmbrechtsMaejima2002}.

\begin{lemma}\label{lem:unique-H}
A nonzero discrete
\(p\)-adic sssi process has a unique self-similarity index.
\end{lemma}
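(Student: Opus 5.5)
Suppose that $X=\{X_n\}_{n\in\Nzero}$ is nonzero and discrete $p$-adic $H$-sssi and also $H'$-sssi with $H\neq H'$. We aim to show that $X_n=0$ almost surely for every $n$. The proof only uses the scaling identity with $a=p$, which gives $X_{pn}\dd p^{-H}X_n$ and also $X_{pn}\dd p^{-H'}X_n$.

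\emph{Step 1 (marginal scaling).} Combining the two identities, we get $p^{-H}X_n\dd p^{-H'}X_n$. Setting $c=p^{H-H'}\neq 1$, this reads $X_n\dd cX_n$. Iterating in the appropriate direction, we may assume $c<1$, so that $X_n\dd c^kX_n$ for every $k\in\N$.

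\emph{Step 2 (conclusion for each $n$).} Since $c^k X_n\to0$ almost surely as $k\to\infty$, we have $c^kX_n\Rightarrow0$ in distribution. Because the law of $c^kX_n$ equals the law of $X_n$ for every $k$, the law of $X_n$ must be the point mass at $0$. Hence $X_n=0$ almost surely for every $n\in\Nzero$. Then all finite-dimensional distributions of $X$ are degenerate at $0$, so $X$ is the zero process, which contradicts the assumption that $X$ is nonzero.

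\emph{Remark on $n$ and the main obstacle.} There is no genuine obstacle here. Note that $n=0$ is trivially fine: stationarity of increments with $n=0$ gives $X_0\dd X_h-X_h=0$, so $X_0=0$, and the case $n=0$ needs no scaling at all. The only point requiring care is to make sure that the argument uses the marginal at $n$ itself and not the marginal at $pn$. It does: Step 1 derives $X_n\dd cX_n$ directly from the two scaling identities for the same index $n$, and the limit argument in Step 2 then applies to every $n$ separately.
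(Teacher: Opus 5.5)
Your proof is correct and is essentially the paper's argument: both compare $X_{pn}\dd p^{-H}X_n$ with $X_{pn}\dd p^{-H'}X_n$ and conclude that a marginal that is not a.s.\ zero cannot satisfy $X\dd cX$ with $c\neq 1$. The only differences are minor. The paper works at $n=1$ and uses $X_a\dd |a|_p^H X_1$ to see that $X_1$ is not a.s.\ zero for a nonzero process, whereas you run the argument at every $n$ directly. You also spell out the iteration $X_n\dd c^kX_n\to 0$, which the paper leaves implicit.
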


\begin{proof}
Suppose the same nonzero process is both \(H\)-sssi and \(H'\)-sssi.  Then $ X_p\dd p^{-H}X_1\dd p^{-H'}X_1.$ 
The random variable \(X_1\) is not constant nonzero, otherwise the entire process is constant zero by self-similarity. 
As a result, \(H=H'\).
\end{proof}

\subsection{Projective spectral supports}

We first introduce a variant of the spectral measure of \cite[Sections~2.3--2.4]{SamorodnitskyTaqqu1994}, which will serve as a non-trivial comparison test of the processes.

\begin{definition}
Assume \(0<\alpha<2\) and $d\geq 1$.  If a real symmetric \(S\alpha S\) vector
\(V=(V_1,\ldots,V_d)\) has the representation
\[
  V=\int_E f(x)\,M_\alpha(dx),
  \qquad f:E\to\R^d,
\]
then its projective spectral measure is defined as the pushforward of
\(\|f(x)\|^\alpha\mu(dx)\) restricted to \(\{x:f(x)\ne0\}\) under
\[
  x\mapsto [f(x)]\in\mathbf P^{d-1}(\R),
\]
where \(\mathbf P^{d-1}(\R)\) is the \((d-1)\)-dimensional real projective space.
Its projective spectral support is defined as the support of the projective spectral measure. 
\end{definition}

 Our projective spectral measure is the pushforward of the usual
finite-dimensional symmetric stable spectral measure of
\cite[Sections~2.3--2.4]{SamorodnitskyTaqqu1994} under the antipodal quotient
map \(\bbS^{d-1}\to\mathbf P^{d-1}(\R)\). By the uniqueness of the finite-dimensional symmetric stable
spectral measure, the projective spectral support is determined by the law of \(V\); see \cite[Theorem 2.3.1]{SamorodnitskyTaqqu1994}. The next lemma provides the comparison test.

\begin{lemma}\label{lem:projective-tests}
Assume that \(0<\alpha<2\).
\begin{enumerate}[label=(\roman*)]
\item For the \(p\)-adic log-fractional stable motion, the projective spectral support of
      \((X(1),X(p))\) is infinite.\label{test1}
\item If a decorated $p$-adic linear fractional stable motion has \(H=1/\alpha\), then the
      projective spectral support of \((X(1),X(p))\) is finite.\label{test2}
\item Every nonzero decorated $p$-adic linear fractional stable motion with \(H\ne1/\alpha\) has
      some two-dimensional vector with infinite projective spectral support.\label{test3}
\item Every finite-dimensional vector of every decorated BRW tree process has finite
      projective spectral support.\label{test4}
\item There is a decorated $p$-adic harmonizable fractional stable motion with \(H=1/\alpha\) such that the projective
      spectral support of \((X_1,X_p)\) is infinite.\label{test5}

\end{enumerate}
\end{lemma}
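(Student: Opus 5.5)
The common tool is that the projective spectral support depends only on the law of the vector (by the uniqueness of the symmetric stable spectral measure, \cite[Theorem 2.3.1]{SamorodnitskyTaqqu1994}). So we may read it off from any stable-integral representation. Concretely, we look at the set of directions \([f_{t_1}(x):f_{t_2}(x)]\) taken on sets of positive \(\mu\)-measure where the kernel is nonzero. If infinitely many distinct directions each carry positive mass, the closed support contains infinitely many points and is therefore infinite. If the kernel vector takes only finitely many directions \(\mu\)-a.e., the projective spectral measure is purely atomic with finitely many atoms, so the support is finite.

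For \ref{test1}, restrict to the shells \(S_j\) with \(j\le -2\). There \(\abs{1-x}_p=1\) and \(\abs{p-x}_p=p^{-1}\), so
\[
(f_1(x),f_p(x))=\bigl(-j\log p,\,(-1-j)\log p\bigr).
\]
The ratio \(j/(j+1)\) is distinct for each \(j\), and each \(S_j\) has positive Haar measure, so \ref{test1} follows.

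For \ref{test2}, when \(H=1/\alpha\) we have \(G_{s,\gamma}(z)=\psi(\cdot,\cdot)\). Hence each coordinate of the kernel is a difference of two values of \(\psi\), which has finite domain. The vector \((F_1,F_p)\) therefore takes finitely many values, and \ref{test2} follows.

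For \ref{test4}, fix \(t_1,\dots,t_d\). On level \(k\) the kernel vector is \(p^{-kH}\) times a vector whose entries are differences of values of \(W\), which also has finite domain. The scalar factor \(p^{-kH}\) does not change the projective class, so only finitely many directions occur, for every \(d\).

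For \ref{test5}, take \(L=1\), \(\phi(r)=\cos(2\pi r)\), and \(H=1/\alpha\); this is admissible since \(H>0\) is arbitrary. Fix level \(k=2\) and a unit \(u\). The vector
\[
\bigl(\cos(2\pi(r+u/p^2))-\cos 2\pi r,\ \cos(2\pi(r+u/p))-\cos 2\pi r\bigr)
\]
is, as a function of \(r\in\T\), of the form \((\cos(\theta+a)-\cos\theta,\cos(\theta+b)-\cos\theta)\) with \(a\not\equiv b\), \(a,b\not\equiv 0 \pmod{2\pi}\). Its direction is a nonconstant continuous function of \(\theta\). Hence a continuum of directions appears, each neighbourhood carrying positive \(\lambda_\T\)-mass, and the support is infinite.

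The main step is \ref{test3}. Put \(\beta=H-1/\alpha\neq0\). Since \(\psi\not\equiv0\), pick \((s_0,\gamma_0)\) with \(c:=\psi(s_0,\gamma_0)\ne0\). Take \(t_1=1\) and \(t_2=p^{L}\), and fix the decoration variables \((s,\gamma)=(s_0,\gamma_0)\) (a set of positive \(\nu_{L,c}\)-mass). Then \(\omega_c(t_2)=\omega_c(1)=1\) and \(v_p(t_2)\equiv v_p(1)\pmod L\), so both \(G_{s_0,\gamma_0}(t_1-x)\) and \(G_{s_0,\gamma_0}(t_2-x)\) see the value \(c\), for \(x\) small.

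Now consider \(x\) with \(\abs{x}_p\) tiny, \(v_p(-x)\equiv 0\pmod L\), and \(\omega_c(-x)=1\); these conditions define sets of positive Haar measure on each relevant shell. With \(y=\abs{x}_p^\beta\), the kernel vector is
\[
\bigl(c-cy,\ c\,p^{-L\beta}-cy\bigr).
\]
The map \(y\mapsto (1-y)/(p^{-L\beta}-y)\) is a nondegenerate Möbius map, since \(p^{-L\beta}\ne1\), and hence injective. As \(y\) ranges over the infinitely many values \(p^{-jL\beta}\) (whether \(\beta\) is positive or negative), we obtain infinitely many distinct directions, each on a set of positive measure. The care needed here is bookkeeping: we must ensure the ultrametric identities \(\abs{t_i-x}_p=\abs{t_i}_p\) and \(\omega_c(t_i-x)=\omega_c(t_i)\) hold once \(\abs{x}_p<p^{-c}\abs{t_i}_p\), so that the decoration values are exactly \(c\) on both coordinates.
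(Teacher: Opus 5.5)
Your proposal is correct and follows the paper's proof essentially part by part: the same shells ($x\in p^m\Z_p^\times$ with $m\ge 2$, i.e.\ your $S_j$ with $j\le -2$) for (i), the finite-range arguments for (ii) and (iv), and the same level-$2$ cosine example for (v). In (iii) you use a mild variant: you localize near $x=0$ with the decoration frozen at $(s_0,\gamma_0)$, take the fixed times $(1,p^L)$, and use injectivity of $y\mapsto(1-y)/(p^{-L(H-1/\alpha)}-y)$. The paper instead localizes near $x=1$ and chooses a time $b$ so that one coordinate stays a fixed nonzero constant while the other runs through $p^{-m(H-1/\alpha)}\psi(s_0,\gamma_0)-G_{s,\gamma}(-1)$; both arguments rest on the same singular factor $\abs{z}_p^{H-1/\alpha}$ along a progression of valuations on which $\psi$ is frozen at a nonzero value.
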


\begin{proof}
 \ref{test1} For \(x\in p^m\Z_p^\times\), \(m\ge2\), we have
\[
  (f_1^{\log}(x),f_p^{\log}(x))=(m\log p,(m-1)\log p).
\]
The slopes \((m-1)/m\) are all distinct, and each $p^m\Z_p^\times$ has positive Haar
measure. Therefore, the projective spectral support is infinite.

 \ref{test2} If \(H=1/\alpha\), the factor
\(\abs{z}_p^{H-1/\alpha}\) in Definition \ref{thm:linear-fractional} is
identically one on \(\Q_p^\times\). Moreover, for fixed times \(1\) and \(p\), the values of $G_{s,\gamma}(1-x)$, $G_{s,\gamma}(p-x),$ and $G_{s,\gamma}(-x)$ depend only on finitely many possibilities.  Thus, the vector
\((F_1(x,s,\gamma),F_p(x,s,\gamma))\) takes only finitely many values, so the projective spectral support is finite.

\ref{test3} Suppose that \(H\ne1/\alpha\), and choose
\((s_0,\gamma_0)\) with \(\psi(s_0,\gamma_0)\ne0\).  Fix
\((s,\gamma)\).  Choose \(m_0\pmod L\) so that \(s+m_0\equiv s_0\pmod L\).
Also choose \(b\) such that
\begin{align}
     G_{s,\gamma}(b-1)\neq G_{s,\gamma}(-1).\label{eq:nonconst}
\end{align}
This is possible: if \(G_{s,\gamma}(-1)\ne0\), take $b=1-p^L$; if \(G_{s,\gamma}(-1)=0\), take \(b-1\) such that $s+v_p(b-1)\equiv s_0\pmod L
  $ and $\gamma\omega_c(b-1)=\gamma_0$.

For all sufficiently large $m$ with \(m\equiv m_0\pmod L\), the set
\[
  \{y\in p^m\Z_p^\times:\omega_c(y)=\gamma^{-1}\gamma_0\}
\]
has positive Haar measure.   Since \(y\) is
\(p\)-adically small,
\begin{align*}
    &\bigl(F_1(1-y,s,\gamma),F_b(1-y,s,\gamma)\bigr)
  \\&=
  \left(
    p^{-m(H-1/\alpha)}\psi(s_0,\gamma_0)-G_{s,\gamma}(-1),~
    G_{s,\gamma}(b-1)-G_{s,\gamma}(-1)
  \right).
\end{align*}
As the sufficiently large number \(m\) varies along the infinite progression \(m\equiv m_0\pmod L\), the
first coordinate takes infinitely many values, while the second coordinate is a fixed nonzero number by \eqref{eq:nonconst}.
Therefore, the corresponding two-dimensional projective spectral support is infinite.

\ref{test4} This follows directly from the fact that $W$ is finitely valued in Definition \ref{thm:tree-block}, which implies that for any $t_1,\dots,t_m$,
\(
  \bigl(\Phi_{k,z,u,s}(t_i)-\Phi_{k,z,u,s}(0)\bigr)_{i=1}^m
\)
is finitely valued, and multiplication of $p^{-kH}$ does not change the direction in $\mathbf P^{m-1}(\R)$.

 \ref{test5} Take \(H=1/\alpha\), \,\(L=1\), and
\(\phi_0(r)=\cos(2\pi r)\).  On the level \(k=2\) and the atom
\(u=1\in(\Z/p^2\Z)^\times\),
the kernel vector of \((X_1,X_p)\) is a nonzero
constant multiple of
\[
  \left(
    \cos\left(2\pi r+\frac{2\pi}{p^2}\right)-\cos(2\pi r),\,
    \cos\left(2\pi r+\frac{2\pi}{p}\right)-\cos(2\pi r)
  \right).
\]
The ratio of the second coordinate to the first
coordinate is nonconstant on every interval of \(r\) on which the first coordinate is
nonzero since they have different phases. This implies that the projective spectral support is infinite.
\end{proof}

 \subsection{Minimal spectral-sup functionals}

Let \(0<\alpha<2\), and \(X=(X_n)_{n\in\mathbb N_0}\) be a real symmetric
\(S\alpha S\) process with a spectral representation
\begin{align}
    X_n=\int_E f_n\,dM_\alpha ,\qquad n\in \N_0.\label{eq:repn}
\end{align}
Following \cite{Rosinski1994}, we say that the representation \eqref{eq:repn} is minimal if, after removing the set on which all kernels vanish, the $\sigma$-field generated by \(f_n/f_m\), \(n,m\in \N_0\) is the entire $\sigma$-field modulo null sets. Any real symmetric \(S\alpha S\) process has a minimal spectral representation.

\begin{definition}
Let $X$ be a real symmetric \(S\alpha S\) process with minimal representation given by \eqref{eq:repn}. Then the minimal spectral supremum of $X$ is defined as
\[
    \mathfrak S(f)
    :=
    \int_E \sup_{n\in\mathbb N_0}|f_n(x)|^\alpha\,\mu(dx)
    \in[0,\infty].
\]
\end{definition}

Part \ref{spectralsup1} of the following lemma ensures that the minimal spectral supremum is well-defined, since a minimal spectral representation may not be unique.

\begin{lemma}
\label{lem:minimal-spectral-sup}
Let \(0<\alpha<2\).
\begin{enumerate}[label=(\roman*)]
\item If \(f\) and \(g\) are two minimal spectral representations of the same
process \(X\), then $\mathfrak S(f)=\mathfrak S(g).$ \label{spectralsup1}

\item If, after removing the zero sets, the projective sequence map
\[
    x\longmapsto [(f_n(x))_{n\in\mathbb N_0}]
\]
is one-to-one modulo null sets, then the representation \(f\) is minimal.\label{spectralsup2}

\item If \(X\) has some representation \(f\) with
\(
    \mathfrak S(f)<\infty,
\)
then \(X\) has a minimal representation \(h\) with
\(
    \mathfrak S(h)=\mathfrak S(f)<\infty.
\)
\label{spectralsup3}
\end{enumerate}
\end{lemma}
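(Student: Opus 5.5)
We rely on Rosiński's structure theorem for minimal representations. If \(f\) on \((E,\mu)\) and \(g\) on \((F,\nu)\) are both minimal representations of the same \(S\alpha S\) process, then \cite{Rosinski1994} provides the following data: a bimeasurable bijection \(\Phi\) between the supports \(F\setminus\{g\equiv0\}\) and \(E\setminus\{f\equiv0\}\) (modulo null sets), a sign function \(\epsilon\) with \(|\epsilon|=1\), and the Radon--Nikodym derivative \(\frac{d(\mu\circ\Phi)}{d\nu}\). In terms of these,
\[
  g_n(y)=\epsilon(y)\Bigl(\tfrac{d(\mu\circ\Phi)}{d\nu}(y)\Bigr)^{1/\alpha} f_n(\Phi(y))\quad\text{for all }n,\ \nu\text{-a.e. }y .
\]

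For part \ref{spectralsup1}, the multiplier in this identity does not depend on \(n\), so it can be pulled out of the supremum:
\[
  \sup_n|g_n(y)|^\alpha=\frac{d(\mu\circ\Phi)}{d\nu}(y)\,\sup_n|f_n(\Phi(y))|^\alpha .
\]
Integrating against \(\nu\) and changing variables through \(\Phi\) gives \(\int_E\sup_n|f_n|^\alpha\,d\mu\). Hence \(\mathfrak S(g)=\mathfrak S(f)\), including the case where both are infinite. The main care here is that the supremum is over a countable index set, so \(\sup_n|f_n|^\alpha\) is measurable and the a.e. identity holds simultaneously for all \(n\).

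For part \ref{spectralsup2}, recall that minimality means \(\sigma(f_n/f_m)\) equals the full \(\sigma\)-field modulo null sets, after the zero set is removed. The projective map \(\Pi:x\mapsto[(f_n(x))_n]\) takes values in a standard Borel space, the projectivization of \(\R^{\Nzero}\setminus\{0\}\). This map is measurable with respect to \(\sigma(f_n/f_m)\), since the ratios \(f_n/f_m\) (with values in \(\R\cup\{\infty\}\), on the sets where they are defined) determine the projective class. Conversely, each ratio is a function of \(\Pi\). Now use that \(E\) is Borel and \(\Pi\) is injective modulo null sets. Then, after discarding a null set, \(\Pi\) is injective on a Borel set. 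By the Lusin--Souslin theorem it is therefore a Borel isomorphism onto its image, so every Borel set in \(E\) is of the form \(\Pi^{-1}(B)\). This gives \(\sigma(\Pi)=\mathcal E\) modulo null sets, which is minimality.

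For part \ref{spectralsup3}, we follow Rosiński's construction of a minimal representation from an arbitrary one. Let \(\mathcal G=\sigma(f_n/f_m)\) on the support \(E'=\{x:f(x)\neq0\}\). Set \(\rho=\sum_n 2^{-n}|f_n|^\alpha/(1+\|f_n\|_\alpha^\alpha)\), and let \(\bar\rho=\E_\mu[\rho\mid\mathcal G]\), understood as a Radon--Nikodym derivative of \(\mu\)-measures restricted to \(\mathcal G\). Define \(h_n=f_n(\rho/\bar\rho)^{-1/\alpha}\). Each \(h_n\) is \(\mathcal G\)-measurable, because \(f_n/\rho^{1/\alpha}\) is a function of the ratios. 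The new representation is \(h_n\) on \((E',\mathcal G,\bar\mu)\) with \(d\bar\mu=(\rho/\bar\rho)\,d\mu|_{\mathcal G}\), or equivalently \(\bar\mu=\bar\rho\cdot(\mu|_{\mathcal G})/\bar\rho\) after the conditional expectation is taken. It satisfies \(\int|\sum\theta_nh_n|^\alpha d\bar\mu=\int|\sum\theta_nf_n|^\alpha d\mu\), so it represents \(X\). It is minimal, possibly after passing to the quotient standard space, which is the technical step handled in \cite{Rosinski1994}. The key observation is that the same manipulation applies to \(\sup_n|f_n|^\alpha=\rho\cdot\sup_n|f_n/\rho^{1/\alpha}|^\alpha\). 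Here the second factor is \(\mathcal G\)-measurable, so conditioning only replaces \(\rho\) by \(\bar\rho\), giving
\[
  \int\sup_n|h_n|^\alpha\,d\bar\mu=\int\sup_n|f_n|^\alpha\,d\mu=\mathfrak S(f).
\]
I expect the main obstacle to be the measure-theoretic bookkeeping in this reduction: passing to \(\mathcal G\), checking the \(\sigma\)-finiteness and standardness of the quotient, and confirming that the resulting representation is genuinely minimal. For these I would cite the construction in \cite{Rosinski1994} rather than redo it. Once \(h\) is constructed, part \ref{spectralsup1} shows that the value \(\mathfrak S(h)\) does not depend on which minimal representation is chosen.
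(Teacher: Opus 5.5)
Your arguments for (i) and (ii) are the paper's. In (i) you pull the $n$-independent Rosiński multiplier out of the supremum and change variables; in (ii) you use Lusin--Souslin, with more detail than the paper gives.

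Part (iii), however, contains two steps that fail as written.

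First, $f_n/\rho^{1/\alpha}$ is \emph{not} $\mathcal G$-measurable. Every ratio $f_n/f_m$ is invariant under $f(x)\mapsto-f(x)$, whereas $f_n/\rho^{1/\alpha}$ changes sign. For instance, on $E=[0,1]\times\{\pm1\}$ with $f_n(x,\sigma)=\sigma g_n(x)$, $\mathcal G$ cannot see $\sigma$. Only $|f_n|^\alpha/\rho=\bigl(\sum_m 2^{-m}|f_m/f_n|^\alpha/(1+\|f_m\|_\alpha^\alpha)\bigr)^{-1}$ (on $\{f_n\ne0\}$) is a function of the ratios. This suffices for your supremum factor, but not for $h_n$.

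Second, $\bar\rho=\E_\mu[\rho\mid\mathcal G]$ need not exist, because $\mu|_{\mathcal G}$ need not be $\sigma$-finite. If $f_n=c_nf$ with $f\ne0$ everywhere and $\mu(E)=\infty$, then $\mathcal G$ is trivial. No constant $\bar\rho$ then satisfies $\bar\rho\,\mu(E)=\int\rho\,d\mu\in(0,\infty)$. Since your $\bar\mu$ is just $\mu|_{\mathcal G}$, the representation on $(E',\mathcal G,\bar\mu)$ breaks down exactly here.

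Both problems disappear if you drop $\mathcal G$ and the conditional expectation and argue as the paper does, with your $\rho$ in place of $G=\sup_n|f_n|$. Let $n(x)$ be the first index with $f_n(x)\neq0$, and put $\epsilon(x)=\operatorname{sign}f_{n(x)}(x)$ and $k=\epsilon f\rho^{-1/\alpha}$. Push the finite measure $\rho\mu$ forward under $x\mapsto k(x)\in\R^{\Nzero}$, with coordinate kernels $y\mapsto y_n$.

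Since $|\epsilon|=1$, this represents $X$, and its spectral supremum is $\int\sup_n|k_n|^\alpha\rho\,d\mu=\mathfrak S(f)$. The image lies in $\{y:\sum_nc_n|y_n|^\alpha=1,\ \text{first nonzero }y_n>0\}$, where $c_n=2^{-n}/(1+\|f_n\|_\alpha^\alpha)$. On that set the projective map is injective, so (ii) gives minimality on a standard space, with no appeal to Rosiński's construction.

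The paper instead normalizes by $G$ and uses a selector on $[-1,1]^{\Nzero}/\{\pm1\}$. That requires $\mathfrak S(f)<\infty$, so that $G$ is finite a.e.\ and $G^\alpha\mu$ is finite. Your normalization does not need this. Together with (i), it therefore shows that $\mathfrak S$ takes the same value on \emph{every} representation, minimal or not. That would even make the minimality checks in Lemma~\ref{lem:packet-spectral-sup} unnecessary.
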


\begin{proof}
\ref{spectralsup1}  By Rosiński's uniqueness theorem for minimal stable
representations \cite{Rosinski1994}, two minimal representations \(f\) on
\((E,\mu)\) and \(g\) on \((E',\mu')\) of the same $\N_0$-indexed \(S\alpha S\)
process are related, after removing null sets, by an isomorphism
\(\Phi:E'\to E\) and  \(c:E'\to
\mathbb R\setminus\{0\}\) such that
\[
    g_n(y)=c(y)f_n(\Phi (y)),
    \qquad n\in\mathbb N_0,
\]
and
\(    \Phi_\#(|c|^\alpha\mu')=\mu .
\)
It follows that 
\[
\begin{aligned}
    \mathfrak S(g)
    &=
    \int_{E'}\sup_n |g_n(y)|^\alpha\,\mu'(dy)  \\
    &=
    \int_{E'} |c(y)|^\alpha
       \sup_n |f_n(\Phi (y))|^\alpha\,\mu'(dy) \\
    &=
    \int_E \sup_n |f_n(x)|^\alpha\,\mu(dx)
    =
    \mathfrak S(f).
\end{aligned}
\]

\ref{spectralsup2} This is clear from the Lusin--Suslin theorem.

\ref{spectralsup3} Suppose  that \(\mathfrak S(f)<\infty\) for some representation $f$. Let
\(
G(x)=\sup_{n\in \N_0}|f_n(x)|
\)
and we restrict to \(E_0=\{G>0\}\). Let $y(x)=(f_n(x)/G(x))_{n\in \N_0}\in S:=[-1,1]^{\N_0}$ and define the pushforward \(\nu=y_\#(G^\alpha\mu)\). 
With \(g_n(y)=y_n\), \(\nu\) satisfies, for every finite linear combination,
\[
\int_S\bigg|\sum_j\theta_j g_{n_j}(y)\bigg|^\alpha\,\nu(dy)
=
\int_E\bigg|\sum_j\theta_j f_{n_j}(x)\bigg|^\alpha\,\mu(dx).
\]
In other words, \(g\) is an equivalent representation and \(\sup_n|g_n(y)|=1\) on its support. It follows that
\[
  \mathfrak S(g)
  =
  \int_S \sup_{n\in\N_0}|g_n(y)|^\alpha\,\nu(dy)
  =
  \int_S 1\,\nu(dy)
  =
  \nu(S)
  =
  \mathfrak S(f)
  <\infty .
\]

It remains to make the representation $g$ minimal.  Let
\({P}=S/\{\pm1\}\),  \(\pi:S\to P\) be the quotient map, and put
\(\bar\nu=\pi_\#\nu\).  Choose a measurable selector
\(\sigma:P\to S\) and define
\(
  h_n(q)=\sigma(q)_n,\, q\in P .
\)
Since
\(
  |\sum_j\theta_j y_{n_j}|^\alpha
  =
  |\sum_j\theta_j (-y_{n_j})|^\alpha,
\)
the kernels \(h_n\) give the same finite-dimensional laws.  Moreover,
\[
  \mathfrak S(h)
  =
  \int_P \sup_{n\in\N_0}|h_n(q)|^\alpha\,\bar\nu(dq)
  =
  \bar\nu(P)=\nu(S)=\mathfrak S(f)<\infty .
\]
By construction, the map \(q\mapsto[(h_n(q))_{n\in\N_0}]\)
is injective, and hence the representation \(h\) is minimal by \ref{spectralsup2}.
\end{proof}

\begin{lemma}
\label{lem:packet-spectral-sup}
Assume \(0<\alpha<2\).  Then every decorated $p$-adic harmonizable fractional stable motion has finite minimal
spectral supremum.  On the other hand, the \(p\)-adic log-fractional stable motion with $H=1/\alpha$, the $p$-adic linear fractional stable motion with $H<1/\alpha$, and the BRW tree process with $H\leq 1/\alpha$ all have infinite minimal
spectral supremum.
\end{lemma}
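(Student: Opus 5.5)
The plan is to prove the finite case by a direct bound combined with Lemma~\ref{lem:minimal-spectral-sup}\ref{spectralsup3} and \ref{spectralsup1}. The infinite cases will follow by showing that the natural representations are minimal, via Lemma~\ref{lem:minimal-spectral-sup}\ref{spectralsup2}, and computing $\mathfrak S$ directly.

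\emph{Finite case.} For the decorated $p$-adic harmonizable fractional stable motion with kernel \eqref{eq:packet-kernel}, we have the pointwise bound $\sup_{n}|f_n(k,r,u,s)|\le 2\max_s\|\phi_s\|_\infty\, p^{-kH}$. Each level $k$ of $E$ carries total mass one under $\mu$. Hence
\[
\mathfrak S(f)\le \bigl(2\max_s\|\phi_s\|_\infty\bigr)^\alpha\sum_{k\ge1}p^{-k\alpha H}<\infty .
\]
Lemma~\ref{lem:minimal-spectral-sup}\ref{spectralsup3} then produces a minimal representation with finite spectral supremum. By Lemma~\ref{lem:minimal-spectral-sup}\ref{spectralsup1}, every minimal representation has this same finite value.

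\emph{Infinite cases: the computation.} This step is elementary once the supremum is restricted to $n\in\Nzero\subset\Z_p$.
\begin{itemize}
\item For the log-fractional kernel, every $x$ with $\abs{x}_p>1$ satisfies $\abs{n-x}_p=\abs{x}_p$, so $f_n^{\log}(x)=0$ for all $n$. For $x\in\Z_p\setminus\Nzero$, which has full measure in $\Z_p$, density of $\Nzero$ gives integers $n$ with $v_p(n-x)\to\infty$. Thus $\sup_n|f^{\log}_n(x)|=\infty$ on a set of Haar measure one, and $\mathfrak S=\infty$.
\item For the linear fractional kernel with $H<1/\alpha$, the same argument works, since $\abs{n-x}_p^{H-1/\alpha}\to\infty$ as $n\to x$.
\item For the BRW tree kernel \eqref{eq:brw-tree}, every atom $(k,r)$ has $\sup_n|f_n(k,r)|=p^{-kH}$. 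Indeed, take $n\equiv r$ if $r\not\equiv0$, and take $n=1$ if $r\equiv 0$. Therefore
\[
\mathfrak S=\sum_{k\ge0}p^{k+1}p^{-k\alpha H},
\]
which diverges exactly when $\alpha H\le1$.
\end{itemize}

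\emph{Infinite cases: minimality.} The remaining task is to check that these representations are minimal, using Lemma~\ref{lem:minimal-spectral-sup}\ref{spectralsup2}, so that Lemma~\ref{lem:minimal-spectral-sup}\ref{spectralsup1} transfers the value $\infty$ to every minimal representation.
\begin{itemize}
\item For the BRW tree, the atom $(k,r)$ is recovered from the projective class of $(f_n(k,r))_n$. The support pattern $\{n: n\equiv r \pmod{p^{k+1}}\}$, together with the sign pattern relative to $n\equiv0$, determines $k$ and $r$.
\item For the log-fractional kernel, write $f_n^{\log}(x)=(v_p(x)-v_p(n-x))\log p$ for $x\in\Z_p$. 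The sequence $n\mapsto v_p(n-x)$ is recoverable from the projective class. Its level sets determine the $p$-adic digits of $x$: $v_p(n-x)\ge j$ if and only if $n\equiv x \pmod{p^j}$. The scalar ambiguity is removed because $f_0\equiv0$ and the values are affine in $v_p(n-x)$ with known slope sign. Two values of $n$ with different $v_p(n-x)$ fix the scale, and then $v_p(x)$ is recovered as well.
\item For the linear fractional kernel, the analogous argument applies with $\abs{n-x}_p^{H-1/\alpha}$, using strict monotonicity in $v_p(n-x)$.
\end{itemize}
This injectivity verification, and in particular handling the projective scalar cleanly for the log and linear fractional kernels, is the step I expect to be the main obstacle.

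If the injectivity argument proves awkward, the fallback is Rosiński's structure theorem, which states that an arbitrary representation factors through a minimal one as $f_n=c\cdot g_n\circ\Phi$ with $\Phi_\#(|c|^\alpha\mu)$ equal to the minimal control measure. This gives $\mathfrak S(f)=\mathfrak S(g)$ for every representation exactly as in the proof of Lemma~\ref{lem:minimal-spectral-sup}\ref{spectralsup1}, and makes the minimality check unnecessary.
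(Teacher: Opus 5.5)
Your finite case and your three divergence computations match the paper's proof. These are: the levelwise bound $\sup_n|f_n|\le 2\max_s\|\phi_s\|_\infty\,p^{-kH}$ on levels of $\mu$-mass one, followed by Lemma~\ref{lem:minimal-spectral-sup}\ref{spectralsup3} and \ref{spectralsup1}; $\sup_n|f_n(x)|=\infty$ on $\Z_p\setminus\Nzero$ for the log kernel and for the linear kernel with $H<1/\alpha$; and $\sup_n|f_n(k,r)|=p^{-kH}$ on every BRW atom.

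Like the paper, your primary route then certifies minimality via Lemma~\ref{lem:minimal-spectral-sup}\ref{spectralsup2}, but both injectivity sketches have slips.
\begin{enumerate}
\item \emph{BRW kernel.} The injectivity claim is false when $p=2$, $k=0$. There $f_n(0,0)=-\mathbf 1_{\{n\text{ odd}\}}=-f_n(0,1)$ for all $n$, so two atoms of positive mass share a projective class. The sign you invoke is invisible in $\mathbf P$. In all other cases the support alone suffices: a residue class modulo $p^{k+1}$ has density $p^{-k-1}$, the complement of $p^{k'+1}\Nzero$ has density $1-p^{-k'-1}$, and these agree only if $p^{k+1}=p^{k'+1}=2$. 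The paper flags exactly this exception and merges the two atoms, which leaves $\mathfrak S$ unchanged.
\item \emph{Log kernel.} Your scalar recovery is incomplete as written. The fact $f_0\equiv0$ says nothing about the scalar, the slope sign is not known projectively, and you cannot pick $n$'s with different $v_p(n-x)$ before $v_p(n-x)$ has been recovered. It can be repaired: every $j\in\Nzero$ occurs as some $v_p(n-x)$, so the value set of any representative is a one-sided arithmetic progression whose step and orientation fix the scalar. The paper's argument is shorter, though. Suppose $(f_n(y))_n=\lambda(f_n(x))_n$ with $\lambda\ne0$ and $x\ne y$ in $\Z_p\setminus\Nzero$, and take $n_j\to x$. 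Then $|f_{n_j}(x)|\to\infty$, while $f_{n_j}(y)$ is eventually constant because $|n_j-y|_p=|x-y|_p$ for large $j$. The same argument handles the linear kernel with $H<1/\alpha$.
\end{enumerate}

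Your fallback, by contrast, is sound and is a genuinely different route. Rosi\'nski's theorem does allow one of the two representations to be arbitrary. The computation in the proof of Lemma~\ref{lem:minimal-spectral-sup}\ref{spectralsup1} uses only $\Phi_\#(|c|^\alpha\mu)=\mu_{\min}$ and never the injectivity of $\Phi$. Hence $\mathfrak S$ takes the same value on every representation, and all minimality checks become unnecessary, including the $p=2$ exception.

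You can even bypass Rosi\'nski. The quantity $\int_E\max_{n\le N}|f_n|^\alpha\,d\mu$ is the integral of the even, $\alpha$-homogeneous function $s\mapsto\max_{n\le N}|s_n|^\alpha$ against the symmetric spectral measure of $(X_0,\dots,X_N)$. For $\alpha<2$ that measure is determined by the law, and monotone convergence in $N$ then shows that $\mathfrak S$ is a law invariant.

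The paper's route stays within its minimal-representation definition of $\mathfrak S$, at the cost of case-by-case minimality checks. Yours shows that the word ``minimal'' in that definition is superfluous.
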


\begin{proof}
\textbf{Decorated $p$-adic harmonizable fractional stable motion.} The kernel \eqref{eq:packet-kernel} satisfies
\[
  \sup_{n\in\Nzero}|f_n(k,r,u,s)|
  \le 2\max_s\|\phi_s\|_\infty p^{-kH}.
\]
Therefore,
\[
  \mathfrak S(f)
  \le
  \left(2\max_s\|\phi_s\|_\infty\right)^\alpha
  \sum_{k=1}^{\infty}p^{-k\alpha H}<\infty ,
\]
so it has finite minimal spectral supremum by Lemma~\ref{lem:minimal-spectral-sup}\ref{spectralsup3}.

\vspace{0.2cm}

For each of the three classes below, the representation is minimal after removing
zero sets. Indeed, for the log- and linear-fractional kernels, if
\([(f_n(x))_{n\in\Nzero}]=[(f_n(y))_{n\in\Nzero}]\) and \(x,y\) are distinct elements in $\Z_p$, choosing
\(n_j\in\Nzero\) with \(n_j\to x\) makes \(f_{n_j}(x)\) unbounded while
\(f_{n_j}(y)\) remains bounded, a contradiction.  For the BRW kernel, the zero and nonzero pattern of
\((f_n(k,r))_{n\in\Nzero}\) completely determines \(p^{k+1}\) and \(r\bmod p^{k+1}\),
except when \(p=2,\,k=0\).
This finite duplication can be merged and does not affect the divergence below. Therefore, minimality
follows from Lemma~\ref{lem:minimal-spectral-sup}\ref{spectralsup2}.

\smallskip

\textbf{\(p\)-adic log-fractional stable motion.} 
Since \(\Nzero\) is dense in \(\Z_p\), for every
\(x\in\Z_p\setminus\Nzero\) we can choose \(n_j\in\Nzero\) converging to $x$ in $|\cdot|_p$.  Then
\[
  \bigl|\log|n_j-x|_p-\log|x|_p\bigr|
  \to\infty .
\]
It follows that
\[
  \sup_{n\in\Nzero}\bigl|\log|n-x|_p-\log|x|_p\bigr|=\infty
  \qquad\text{for every }x\in\Z_p\setminus\Nzero .
\]
Since \(m(\Z_p\setminus\Nzero)=1\), the process has infinite minimal spectral supremum.

\smallskip

\textbf{$p$-adic linear fractional stable motion.}  This is similar to the \(p\)-adic log-fractional stable motion, by noting that for $H<1/\alpha$,
\[
  \sup_{n\in\Nzero}\bigl||n-x|_p^{H-1/\alpha}-|x|_p^{H-1/\alpha}\bigr|=\infty
  \qquad\text{for every }x\in\Z_p\setminus\Nzero .
\]

\smallskip

\textbf{BRW tree process.} Recall that the kernel is given by
\[
  p^{-kH}
  \left(
    \mathbf 1_{\{r\equiv n\!\!\pmod {p^{k+1}}\}}
    -
    \mathbf 1_{\{r\equiv0\!\!\pmod {p^{k+1}}\}}
  \right),
  \qquad k\ge0,\ r\in\Z/p^{k+1}\Z .
\]
For every fixed \((k,r)\), choose \(n\) so that exactly one of the two congruences
\[
  r\equiv n\pmod {p^{k+1}}\qquad \text{and}\qquad r\equiv0\pmod {p^{k+1}}
\]
holds.  This is possible: if \(r\equiv0\), take
\(n\not\equiv0\pmod {p^{k+1}}\); if \(r\not\equiv0\), take
\(n\equiv r\pmod {p^{k+1}}\). Therefore,
\[
  \sup_{n\in\Nzero}
  p^{-kH}\left|
    \mathbf 1_{\{r\equiv n\!\!\pmod {p^{k+1}}\}}
    -
    \mathbf 1_{\{r\equiv0\!\!\pmod {p^{k+1}}\}}
  \right|
  \ge p^{-kH}.
\]
Hence
\[
\begin{aligned}
  &\int_E \sup_{n\in\Nzero}\left|
    p^{-kH}
    \left(
      \mathbf 1_{\{r\equiv n\!\!\pmod {p^{k+1}}\}}
      -
      \mathbf 1_{\{r\equiv0\!\!\pmod {p^{k+1}}\}}
    \right)
  \right|^\alpha\,d\mu(k,r)\\
  &\ge
  \sum_{k=0}^\infty
  \sum_{r\in\Z/p^{k+1}\Z} p^{-kH\alpha}  =
  \sum_{k=0}^\infty p^{k+1}p^{-kH\alpha}
  =
  \infty
\end{aligned}
\]
whenever \(H\alpha\le1\).
\end{proof}

\subsection{Main comparison theorem}

In this section, we state and prove the main comparison theorem, using comparison tests based on the previous sections. Note that we will restrict to $\alpha\in(0,2)$, since for the Gaussian case $\alpha=2$, there exists a unique $p$-adic $H$-sssi process for each $H>0$ up to multiplicative constants.

\begin{theorem}
\label{thm:four-noncontainment}
Assume \(0<\alpha<2\). Then none of the following four classes is contained in any of the others: the \(p\)-adic log-fractional stable motion, the decorated $p$-adic linear fractional stable motion, the decorated BRW tree process, and the decorated $p$-adic harmonizable fractional stable motion.
\end{theorem}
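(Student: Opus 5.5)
The plan is to prove twelve separate non-inclusions. For each ordered pair $(\mathcal A,\mathcal B)$ of the four classes, I will exhibit one member of $\mathcal A$ whose law is not the law of any member of $\mathcal B$. Processes are compared through their laws. The decorated harmonizable class is indexed by $\Nzero$, so I will always compare discrete skeletons. By Lemma~\ref{lem:skeleton-determines}, every discrete $p$-adic sssi process extends uniquely in law to $\Q_p$, and equality of laws on $\Q_p$ is equivalent to equality on $\Nzero$. Hence I may pass freely between the two settings.

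Every test used below depends only on the law of the process:
\begin{itemize}
\item the index $H$, by Lemma~\ref{lem:unique-H}, applied to a nonzero process;
\item the projective spectral support of a finite-dimensional vector, by uniqueness of the stable spectral measure;
\item finiteness of the minimal spectral supremum, by Lemma~\ref{lem:minimal-spectral-sup}: part (i) gives independence of the minimal representation, and part (iii) shows that finiteness in any representation forces finiteness in a minimal one.
\end{itemize}

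\textbf{Step 1: the index $H$.} The log-fractional motion has $H=1/\alpha$, and it is nonzero since its kernel is nonzero on a set of positive measure. Each decorated class contains nonzero members with $H\neq1/\alpha$:
\begin{itemize}
\item $\psi\equiv1$ gives the $p$-adic linear fractional stable motion;
\item the BRW tree process appears inside the decorated BRW class;
\item $\phi_0=\cos(2\pi\,\cdot)$ gives the harmonizable motion.
\end{itemize}
By Lemma~\ref{lem:unique-H}, none of the three decorated classes is contained in the log class. Whenever a candidate inclusion relation would equate processes with the same law, that lemma also forces the two $H$'s to agree; I use this in Steps 2 and 3.

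\textbf{Step 2: projective spectral supports (Lemma~\ref{lem:projective-tests}).}
\begin{itemize}
\item \emph{Log not in decorated LFSM or decorated BRW.} A matching process would have $H=1/\alpha$. By parts (ii) and (iv), it would then have finite support for $(X(1),X(p))$, contradicting part (i).
\item \emph{Decorated LFSM not in decorated BRW.} Take $\psi\equiv1$ and $H\neq1/\alpha$. By part (iii), some two-dimensional vector $(X(1),X(b))$ has infinite support, while part (iv) says every vector of a decorated BRW process has finite support.
\item \emph{Decorated BRW not in decorated LFSM.} Take the BRW tree process with $H\neq1/\alpha$. Any decorated LFSM with the same law would be nonzero with the same $H\neq1/\alpha$. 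By part (iii) it would have a vector with infinite support, contradicting part (iv) for the BRW process. This comparison is carried out on $\Q_p$ via the extension, so the point $b\in\Q_p$ causes no difficulty.
\item \emph{Decorated harmonizable not in decorated LFSM or decorated BRW.} Take the example of part (v), which has $H=1/\alpha$ and infinite support for $(X_1,X_p)$. A matching decorated LFSM would also have $H=1/\alpha$ and hence finite support by part (ii); a matching decorated BRW process would have finite support by part (iv).
\end{itemize}

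\textbf{Step 3: minimal spectral supremum (Lemma~\ref{lem:packet-spectral-sup}).} Every decorated harmonizable motion has finite minimal spectral supremum. On the other hand, the log motion, the linear fractional motion with $\psi\equiv1$ and $H<1/\alpha$, and the BRW tree process with $H\le1/\alpha$ all have infinite minimal spectral supremum. Therefore none of the log class, the decorated LFSM class or the decorated BRW class is contained in the decorated harmonizable class. This completes all twelve pairs.

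The main obstacle is making sure each test is genuinely a law invariant and is applied to the right index set. For the supremum test, $\mathfrak S$ is finite for one representation of the harmonizable process but infinite for a minimal representation of the other process. Lemma~\ref{lem:minimal-spectral-sup}(iii) followed by (i) is exactly what rules out this conflict. For the projective tests, the times involved ($1$, $p$, $b$) must lie in the common index set. I handle this by working on $\Q_p$ via Lemma~\ref{lem:skeleton-determines}, and I would double-check that part (iii) is invoked only for $\Q_p$-indexed classes.
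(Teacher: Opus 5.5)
Your proposal is correct and follows the paper's proof. It uses the same three law invariants: the index $H$ (via Lemmas~\ref{lem:skeleton-determines} and~\ref{lem:unique-H}), projective spectral supports (via Lemma~\ref{lem:projective-tests}), and finiteness of the minimal spectral supremum (via Lemmas~\ref{lem:minimal-spectral-sup} and~\ref{lem:packet-spectral-sup}), applied to the same pairs of classes; you only make explicit what the paper compresses into its opening reduction, namely the twelve ordered pairs, the concrete nonzero witnesses with $H\neq1/\alpha$, and the nonvanishing needed to invoke uniqueness of $H$.
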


\begin{proof}
 By Lemmas \ref{lem:skeleton-determines} and \ref{lem:unique-H}, it remains to compare processes with the same index $H$ and reduce to the discrete skeleton when necessary.

First, we consider the \(p\)-adic log-fractional stable motion, which has index \(H=1/\alpha\). By Lemma \ref{lem:projective-tests}\ref{test1}, the vector \((X(1),X(p))\) has an
infinite projective spectral support, which is not the case for  any decorated $p$-adic linear fractional stable motion with $H=1/\alpha$ and any decorated BRW tree process.  Also, the
log-fractional skeleton has infinite minimal spectral supremum by Lemma
\ref{lem:packet-spectral-sup}, which is not the case for any decorated $p$-adic harmonizable fractional stable motion.  Thus, the log-fractional class is incomparable to each of the
other three classes.

Next, we compare the decorated $p$-adic linear fractional stable motion and the decorated BRW tree process.  For \(H\ne1/\alpha\), any nonzero decorated $p$-adic linear fractional stable motion contains some two-dimensional vector with infinite projective support (Lemma~\ref{lem:projective-tests}\ref{test3}), but any 
two-dimensional vector in a decorated BRW tree process has finite projective support
(Lemma~\ref{lem:projective-tests}\ref{test4}). Thus, the claim follows.

It remains to compare the decorated $p$-adic harmonizable fractional stable motion with the decorated $p$-adic linear fractional stable motion and the decorated BRW tree process.  Choose $X$ in this class such that $H=1/\alpha$ and $(X_1,X_p)$ has infinitely many
projective directions by Lemma \ref{lem:projective-tests}\ref{test5}, then $X$ is not a  decorated $p$-adic linear fractional stable motion or a decorated BRW tree process again by Lemma \ref{lem:projective-tests}\ref{test2} and \ref{test4}.
Conversely, Lemma \ref{lem:packet-spectral-sup} shows that any decorated $p$-adic harmonizable fractional stable motion has finite minimal spectral supremum, while the other two classes all contain examples with infinite minimal spectral suprema. This yields the claim and completes the proof.
\end{proof}

\section*{Acknowledgement}

Yi Shen was supported by the Natural Sciences and Engineering Research Council of Canada [Grant RGPIN-2020-04356]. Zhenyuan Zhang was supported by the Jump Trading Fellowship.

\bibliographystyle{plain}
\bibliography{padic_stable_integral_examples}

\end{document}